\def\BibTeX{{\rm B\kern-.05em{\sc i\kern-.025em b}\kern-.08em
		T\kern-.1667em\lower.7ex\hbox{E}\kern-.125emX}}
\newtheorem{theorem}{Theorem}
\newtheorem{corollary}{Corollary}
\newtheorem{assumption}{Assumption}
\newtheorem{remark}{Remark}
\newtheorem{lemma}{Lemma}
\newtheorem{example}{Example}
\newcommand{\Ae}[1]{\color{black} #1 \color{black}}
\newcommand{\Aee}[1]{\color{black} #1 \color{black}}
\newcommand{\TF}[1]{\color{black}  #1 \color{black}}
\begin{document}
	\title{ Approximate  Dynamic Programming  with Feasibility Guarantees  }
	\author{Alexander Engelmann, Maísa Beraldo Bandeira, Timm Faulwasser
		\thanks{AE and MBB acknowledge financial support by the German Federal Ministry for Economic Affairs and Climate Action (BMWK) under agreement no. 03EI4043A~(Redispatch 3.0).}
	\thanks{All authors are with the Institute for Energy Systems, Energy Efficiency and Energy Economics, TU Dortmund University, Dortmund, Germany. (e-mail: \{alexander.engelmann, timm.faulwasser\}@ieee.org, maisa.bandeira@tu-dortmund.de).}}
	\maketitle
	
	\begin{abstract}
		Safe and economic operation of networked systems is often challenging. Optimization-based  schemes are frequently considered, since they achieve near-optimality while ensuring safety via the explicit consideration of constraints. In applications, these schemes, however, often require solving large-scale optimization problems. Iterative techniques from distributed optimization are frequently proposed for complexity reduction. Yet, they achieve feasibility only asymptotically, which induces a substantial computational burden. This work presents an approximate dynamic programming scheme, which is guaranteed to deliver a feasible solution in “one shot”, i.e., in one backward-forward iteration over all subproblems provided they are coupled by a tree structure. Our proposed scheme generalizes   methods from seemingly disconnected domains such as power systems and optimal control. We demonstrate its efficacy for problems with nonconvex constraints via numerical examples from both domains.
	\end{abstract}
	
	\begin{IEEEkeywords}
		Tree structures, Large-scale Optimization, Hierarchical Optimization, Flexibility Aggregation, TSO-DSO coordination, ADP
	\end{IEEEkeywords}

\newcommand{\assumptionautorefname}{Assumption}

\section{Introduction}
\Aee{Solving large-scale optimization problems for  networked systems  is inherently complex and challenging.} 
\Ae{These problems arise} frequently in the context of critical infrastructure networks such as power systems \cite{Kim1997,Erseghe2014}, district heating and HVAC systems \cite{Ferro2022,   Camponogara2021},  or water networks \cite{Coulbeck1988}.
\Ae{At the same time, they often exhibit tree structures} induced by \Ae{the physical network design and by hierarchical operation principles.}
Stochastic optimization \cite{Shapiro2014,Hubner2020,Pacaud2022}, Model Predictive Control (MPC) \cite{Kozma2013,Shin2019a,Jiang2021a}, and
 combinations thereof \cite{Kouzoupis2018}  \TF{carry similar graph structures, whereby vertices refer to discrete time steps (which implies a path graph) or to realizations of random variables (which leads to a tree).} 
Moreover, methods with a low number of communication rounds are \Ae{preferred in order to reduce algorithm complexity and to avoid  negative effects of communication and computation delays.}

One line of research addresses the above challenges via distributed optimization, i.e.  by decomposition of the overall problem into multiple \Ae{(simpler/smaller) local instances, which} are repeatedly solved until \Ae{convergence and consensus are} achieved \cite{Kim2000,Erseghe2015, Boyd2011,Engelmann2020b, Engelmann2021a,Houska2016,Bertsekas1997a,Zavala2008a}.
\Ae{This way, complexity is reduced while feasibility holds asymptotically.}
Most of the classic distributed algorithms are guaranteed to converge for convex problems \Ae{while many of the above problems are  nonconvex.}

\Ae{For tree-structured problems,} an alternative to distributed optimization are hierarchical schemes based on Dynamic Programming~(DP)  \cite{Bellman1957,Bertele1972,Esogbue1974,Powell2011,Vandenberghe2015,Jiang2021}.
\TF{Classic DP schemes  exploit graph structures \Ae{of optimization problems} by considering subproblems at each vertex via optimal value functions. 
This substantially reduces the number of decision variables per subproblem.}
\Ae{The value functions are computed backwards in recursive fashion  from the leaves to the root.
In a forward pass, the optimal coupling variables between vertices
are computed by solving the corresponding optimization problems at each vertex. }
An advantage of \Ae{many} DP  \TF{schemes is that a solution is obtained in  \emph{one shot}, i.e., one backward-forward pass over all subproblems suffices.}
\Ae{However, due to the infamous curse of dimensionality,  computing  value functions remains difficult.}
Function approximations mitigate this difficulty leading  to Approximate Dynamic Programming~(ADP) ~\cite{Powell2011,Bertsekas2017}.

In general, it is challenging to guarantee feasibility in DP and ADP. 
For the special case of optimal control, DP schemes with feasibility guarantees have been developed
\cite{Bellman1957,Bjornberg2006,Mayne2001,Angeli2008,Bertsekas2017,Rakovic2006,Bjornberg2006, Jones2010}.
These approaches are, however, limited to  problems with convex polyhedral constraints defined over path graphs.
\Ae{Moreover, they are not directly transferable to other domains such as power systems
and they often neglect cost information.}
\Ae{In power systems, DP schemes are frequently discussed  \Ae{for } system operation under uncertainty \cite{Pacaud2022,Papavasiliou2018,Pinto2013}, where  \Ae{subproblems stem from uncertainty realizations or scenarios.} }
Moreover,  \cite{Zhu2022} develops an ADP-like scheme for  Optimal Power Flow~(OPF), where value functions are approximated recursively.
\Ae{Similarly, dual dynamic programming schemes and Bender's decomposition \cite{Flamm2018, Lohndorf2013,Philpott2008,Alguacil2000} construct value functions iteratively. 
Combining these  approaches with  feasibility cuts, which approximate the feasible set of the value functions iteratively leads to asymptotic feasibility guarantees \cite{Fullner2021}.
Doing so, however, these schemes lose their one-shot feasibility property.}

In a parallel  branch of power systems research, methods for \textit{flexibility aggregation} based on set projections  have been developed.
\Ae{These approaches mainly focus on the computation of a feasible, yet suboptimal solution in one shot.}
Similar to DP,  they require tree-structured problems such as OPF \cite{MayorgaGonzalez2018,capitanescu2018, silva2018,contreras2018,nazir2022,givisiez2020,Tan2019,Zhao2016,Muller2019a, Kellerer2015,Capitanescu2023,Tan2023a} or  charging of electric vehicle  \cite{Evans2022,Angeli2021,Ozturk2022,Appino2021}.
These approaches recursively approximate the feasible set of tree-structured problems starting at the leaves of the tree.
\TF{However, in the literature the close connection to DP/ADP is often overlooked.}
\Ae{Hence, cost information is typically neglected and
formal feasibility guarantees are not provided.}


\TF{In the present work, we generalize the above approaches via a unifying framework based on ADP.
We consider rather generic tree-structured optimization problems including constraints (\autoref{sec:probForm}), and thereby  go beyond the previous approaches  \cite{Bjornberg2006,Mayne2001,Rakovic2006,Angeli2008, Jones2010}.}
\Ae{At the same time, our approach is able to consider cost information either exactly or approximately, and it is guaranteed to deliver a feasible solution in one shot.}
\Aee{To the best of our knowledge, the combination of the above properties has so far not been documented in the literature.}

\Ae{\autoref{sec:DP} recalls elements of classic DP. }
\Ae{In \autoref{sec:FP-ADP},} we \Ae{develop the proposed ADP scheme and prove} feasibility guarantees  for general constraint sets. 
The conceptual idea is to consider constraint \Ae{and cost} information of lower-level problems via \Ae{extended real-valued value functions and} set projections, or inner approximations thereof.
These projections can---analogously to value functions in standard DP---be included in the \Ae{parent vertices}  to ensure feasibility \Ae{in the children.}
To discuss the potential for implementation synergies with existing numerical frameworks, 
\TF{\autoref{sec:comp} discusses how}  tools from computational geometry enable \Ae{the computation of set projections for convex polyhedral constraints} \cite{Jones2005,Herceg2013,Jones2010,Bjornberg2006,Angeli2008}.
For nonconvex problems, we propose \TF{a tailored variant of our scheme,} which ensures feasibility \Ae{via} inner approximations of the projection.
Moreover, we discuss the prospect of design centering \cite{Harwood2017} and semi-infinite optimization \cite{Stein2012,Djelassi2021} for obtaining inner approximations in \Ae{\autoref{sec:comp}.}
In order to simplify  computation, the proposed approach can be combined with value function approximations.
 \Ae{Finally, \autoref{sec:caseStudies} demonstrates the applicability of the proposed ADP scheme when combined with appropriate computational tools via examples from power systems and  optimal control.
 \autoref{sec:conclusions} concludes this work. }

\Ae{\textit{Notation:} 
	Given a set of vectors $\{x_i\}_{i \in \mathcal S}$, $\operatorname{vcat}(\{x_i\}_{i \in \mathcal S})=[x_1^\top,\dots,x_{|\mathcal S|}^\top]^\top$ denotes their vertical concatenation in ascending order.
	\TF{We call $\{\mathcal Z_i\}_{i \in S}$ with $\mathcal S = \{1,\dots,S\}$ an \emph{$S$-partition of $\mathcal Z$}, if $\mathcal Z_i \subseteq \mathcal Z$ for all $i \in \mathcal S$, $\cup_{i \in \mathcal S} \mathcal{Z}_i = \mathcal Z$ and all $\mathcal Z_i$ are mutually disjoint.}
	$\mathbb R_{\infty}\doteq\mathbb R \cup \{\infty\}$.
		\Ae{For basic  graph notions used in the context of this work we refer to  \cite{Vandenberghe2015,Bullo2018}.}
}

\section{Tree-Structured Optimization Problems} \label{sec:probForm}
We consider  \Ae{ problems} of the form
\begin{align} \label{eq:sepNLP1}
	\min_{x\in \Ae{\mathbb{R}^{n_x}}} f(x) \doteq \min_{x\in \Ae{\mathbb{R}^{n_x}}} \sum_{i \in \mathcal S} f_i(x_{\Ae{\mathcal I_i}}),
\end{align}
\TF{which are partitioned into subsystems $\mathcal S=\{1,\dots,|\mathcal S|\}$. }
\Ae{Each subsystem $i\in \mathcal S$ is endowed with an objective function $f_i:\mathbb R^{|\mathcal I_i|} \rightarrow \mathbb R_{\infty}$, which depends  on 
$x_{\mathcal I_i} \in  \mathbb{R}^{|\mathcal I_i|}$.  
Here,  $x_{\mathcal I} \in \mathbb R^{|\mathcal I|}$ denotes the components of $x$ selected by the index set $ \mathcal I \subseteq \{1,\dots,n\}$ in ascending order, cf. \Ae{ \cite{Vandenberghe2015,Bertele1972}.  We refer to $x_{\mathcal I} \in \mathbb R^{|\mathcal I|}$ as a \Ae{subvector}  of the global decision vector $x \in \mathbb{R}^{n_x}$. }
Note that $f_i$ is extended real-valued, i.e., $f_i$ considers subproblem constraints implicitly  via its domain.}

\Ae{Next, we analyze the  inherent graph structure of problem~\eqref{eq:sepNLP1}, which we would like to exploit via dynamic programming.
The following notions are based on \Ae{ \cite[Chap. 7]{Vandenberghe2015},  \cite{Bertele1972,Rosenthal1982}.}
We define the \emph{interaction graph} 
\begin{align*}
G = (\mathcal S, \mathcal{E}), \;\; \text{ where } \;\;  	\mathcal E \doteq \{ (i,j) \in \mathcal S\times \mathcal S \; |\; \mathcal I_i \cap \mathcal{I}_j \neq \emptyset\}.
\end{align*}
}
\Ae{Loosely speaking,  the tuple $(i,j)$ is in the edge set $\mathcal E$ if the corresponding subsystems share at least one component of the decision  vector $x$.}
Moreover, we define the set of \emph{coupling variables} between two subsystems $i$ and $j$ by $\mathcal W_{ij} = \mathcal W_{ji}  \doteq \mathcal I_i \cap \mathcal I_j$.
The set of \emph{local variables} is given by $\mathcal L_i \doteq \{ k \in \mathcal I_i \;|\; k \notin \mathcal I_j \text{ for all } i\neq j \}$.
\begin{example}[Interaction graph] \label{ex:coup}
	Consider the problem 
	\begin{align*}
		\min_x \; f_1(x_1,x_2) + f_2(x_2,x_3,x_4) + f_3(x_1,x_4),
	\end{align*}
	\TF{which is in in form of \eqref{eq:sepNLP1}.}
	Local and coupling variables are
\begin{align*}
				\mathcal I_1 &= \{1,2\},&\mathcal I_2 &= \{2,3,4\},& \mathcal I_3 &= \{1,4\},\\
		\mathcal L_1  &= \emptyset, & \mathcal L_2 &= \{3\}, & \mathcal L_4 &= \emptyset, \\
		\mathcal{W}_{12} &= \{2\},& \mathcal{W}_{23} &= \{4\}, & \mathcal{W}_{13} &= \{1\}.
	\end{align*}
	\TF{Observe that since $\mathcal E = \{(1,2),(2,3),(3,4)\}$, the interaction graph $G$ \Aee{contains a cycle of length three} and  is thus \emph{not} a tree \autoref{fig:coupvars} (left).}
	Removing the dependence of $f_3$ on $x_1$, e.g., would render the problem a tree, cf.  \autoref{fig:coupvars} (right).
	\hfill $\square$
\end{example}

\begin{remark}[Alternative graph encoding techniques]
	Note that problem \eqref{eq:sepNLP1} can equivalently be formulated via projection matrices $P_i$, i.e. $f_i(x_{\mathcal I_i}) = f_i(P_ix)$ with appropriately chosen $P_i$ \cite{Kim2009,Pakazad2017}, \cite[Chap. 12]{Vandenberghe2015}, \cite[Chap 7.4]{Nocedal2006}.
	Further approaches  define the graph structure  via the sparsity pattern of constraint matrices \cite{Shin2020,Engelmann2020b}. 
	Other works  express graph coupling in \eqref{eq:sepNLP1} via pairwise-affine coupling constraints \cite{Jiang2021} or they rely on hypergraphs \cite{Jalving2022}.
\hfill $\square$
\end{remark}

\Ae{
We conclude our problem exposition recalling a standard assumption of dynamic programming  tailored to our needs in terms of subsystem indexing:
\begin{assumption}\label{ass:tree}
	Problem \eqref{eq:sepNLP1} is feasible and $G$ is a tree. Moreover,  $1 \in \mathcal S$  is the root of the tree.	\hfill $\square$
\end{assumption}
Since $G$ is a tree, each vertex has one unique parent and  we  define $ \mathcal W_j \doteq \mathcal W_{\operatorname{par}(j),j}$.}

\begin{figure}
	\centering
	\includegraphics[width=0.45\linewidth]{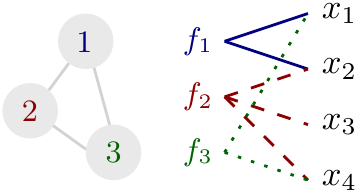}\hfill
	\includegraphics[width=0.45\linewidth]{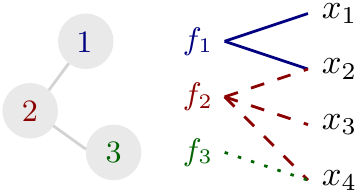}
	\caption{\Ae{Example for a general (left) and a tree-structured (right) interaction graph.}}
	\label{fig:coupvars}
\end{figure}

\section{Recap -- Dynamic Programming}
\label{sec:DP}

\TF{
Denote the children of subsystem $i \in \mathcal{S}$ in $G$ by $\mathcal C_i$.
Since \autoref{ass:tree} holds, we reformulate \eqref{eq:sepNLP1} via value functions \cite{Jiang2021} and this way generalize  DP to tree-structured problems. 
}
\begin{lemma}[Value function formulation of problem \eqref{eq:sepNLP1}] \label{lem:treeRef}
		Let Assumption~\ref{ass:tree} hold.
		Then, problem \eqref{eq:sepNLP1} can  be formulated 
 as		
			\begin{align} \label{eq:ValFunRef0} 
		V_i(x_{\mathcal W_i}) &\doteq \hspace{-.3cm}	\min_{\substack{x_{\mathcal L_i}, \\ \{x_{\mathcal W_j}\}_{j \in \mathcal C_i}}}  \hspace{-.3cm} f_i (x_{\mathcal L_i},\{x_{\mathcal W_i}\}_{j \in \mathcal C_i \cup \{i\}} ) \hspace{-.5mm}+\hspace{-.8mm} \sum_{j \in \mathcal{C}_i} V_j(x_{\mathcal W_j})
			\end{align}
	for all $i\in \mathcal S$. \hfill $\square$
	\end{lemma}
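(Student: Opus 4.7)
The natural approach is structural induction over the subtrees of $G$, proceeding from the leaves towards the root. Two preliminary structural observations carry most of the argument. The first is the disjoint decomposition
\[
\mathcal{I}_i = \mathcal{L}_i \cup \mathcal{W}_i \cup \bigcup_{j \in \mathcal{C}_i} \mathcal{W}_j,
\]
valid for every non-root $i$ with the convention $\mathcal{W}_1 = \emptyset$; disjointness follows directly from the definitions of $\mathcal{L}_i$ and $\mathcal{W}_{ij}$, and exhaustiveness from the fact that every non-local index of $i$ lies in $\mathcal{I}_i \cap \mathcal{I}_k$ for some neighbor $k$ of $i$ in $G$, which under \autoref{ass:tree} must be either the parent of $i$ or one of its children. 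The second observation, which actually uses the tree assumption, is a \emph{subtree disjointness} property: for any $i$ and distinct children $j, j' \in \mathcal{C}_i$, the subtrees $T_j$ and $T_{j'}$ rooted at $j$ and $j'$ share no decision variable indices. Indeed, if some index lay in $\mathcal{I}_s \cap \mathcal{I}_t$ with $s \in T_j$ and $t \in T_{j'}$, then $(s,t) \in \mathcal{E}$ would close a cycle with the unique $s$-to-$t$ path in $G$ through $i$, contradicting that $G$ is a tree.

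With these in hand, I would run the induction. For a leaf $\ell$, $\mathcal{C}_\ell = \emptyset$ and \eqref{eq:ValFunRef0} collapses to $V_\ell(x_{\mathcal{W}_\ell}) = \min_{x_{\mathcal{L}_\ell}} f_\ell(x_{\mathcal{L}_\ell}, x_{\mathcal{W}_\ell})$, i.e.\ the partial minimization of $f_\ell$ with the parent interface held fixed. For the inductive step, suppose that for every $j \in \mathcal{C}_i$ the already-defined $V_j(x_{\mathcal{W}_j})$ equals $\min \sum_{k \in T_j} f_k$ where the minimum is over all variables indexed inside $T_j$ other than $x_{\mathcal{W}_j}$. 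By subtree disjointness, the joint minimization over all descendant variables of $i$ factorizes into independent per-child minimizations, each of which evaluates to $V_j(x_{\mathcal{W}_j})$ by the induction hypothesis. An outer minimization over $x_{\mathcal{L}_i}$ together with $\{x_{\mathcal{W}_j}\}_{j \in \mathcal{C}_i}$, with $x_{\mathcal{W}_i}$ held as a parameter, then reproduces \eqref{eq:ValFunRef0}. Applying this at the root, where $\mathcal{W}_1 = \emptyset$, yields that $V_1$ evaluates to $\min_x f(x)$, establishing the equivalence.

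The main obstacle I expect is the subtree disjointness step: without it, the partial minimization over descendants cannot be split across children, and the proposed recursion would be ill-defined because the same coordinate of $x$ could end up being optimized in two places. Once it is pinned down by the no-cycle argument, the remainder is bookkeeping. Because each $f_i$ is extended real-valued, constraints propagate into the domains of the $V_j$ and no auxiliary feasibility side condition has to be tracked during the partial minimizations; this is also what makes the global feasibility part of \autoref{ass:tree} the only blanket hypothesis required to keep the recursion from returning $+\infty$ at the root.
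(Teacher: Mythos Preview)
Your proposal is correct and follows essentially the same leaf-to-root induction as the paper's proof, which also invokes the factorization $\min_{x,y} f(x)+g(y)=\min_x f(x)+\min_y g(y)$ and proceeds recursively until the root. Your version is in fact more explicit: the paper simply appeals to ``separability'' at this step, whereas you spell out the subtree disjointness (via the no-cycle argument) and the disjoint decomposition of $\mathcal{I}_i$ that actually justify it.
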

\Ae{A proof of this  observation is given in Appendix~\ref{sec:prelim}.}
Observe that in \eqref{eq:ValFunRef0} the dimension of the subproblems $V_i$ is reduced to the local variables $x_{\mathcal L_i}$ and  the coupling variables  $\{x_{\mathcal W_i}\}_{j \in \mathcal C_i}$ to all children $\mathcal C_i$ of subsystem $i \in \mathcal S$. 
Recall that  if $i$ is a leaf, $\mathcal C_i = \emptyset$ and that the root $i=1$ has no parent. Thus, $V_1$ does not depend on a $x_{\mathcal W_1}$.
It is worth to be noted that  \eqref{eq:ValFunRef0} is a generalization of the \Ae{finite-dimensional, deterministic} \emph{Bellman equation} 
to tree-structured problems, cf. \cite{Bertsekas2017,Powell2011}. 

\begin{algorithm}[t]
	\SetAlgoLined
	\caption{Classic Serial DP} \label{alg:classDP}
	\BlankLine
	\textbf{Initialize} $\mathcal H \doteq \mathcal L$ \\
	Backward sweep:
	\While{\textnormal{$\mathcal H \neq  \{1\}$}}{ \vspace{.27em}
		1) Compute  $V_j$ for all $j \in \mathcal H$ and send them to parent vertices  $ i\in \operatorname{par}(\mathcal H)$. \\
		2) Include  all $V_j$  in $V_i$ (eq. \eqref{eq:ValFunRef0}) of all $ i\in \operatorname{par}(\mathcal H)$. \\
				\phantom{$\mathcal H$} \\[.11em]
		3) Set $\mathcal H \leftarrow \operatorname{par}(\mathcal H)$.}
	Forward sweep: 
	\While{\textnormal{$\mathcal H \neq \mathcal L$}}{
		1) Solve  $V_j$ for all $j \in \mathcal H$. \\
		2) Distribute optimal coupling variables $\Ae{x_{\mathcal W_j}^\star}$ to all children $j \in \mathcal C_i$\\
		3) Set $\mathcal H \leftarrow \operatorname{chld}(\mathcal H)$.}
	\textbf{Return} \Ae{$x_{\mathcal I_i}^\star$ for all $i\in \mathcal S$}.
	\BlankLine
\end{algorithm}

\begin{figure}
	\centering
	\includegraphics[width=\linewidth]{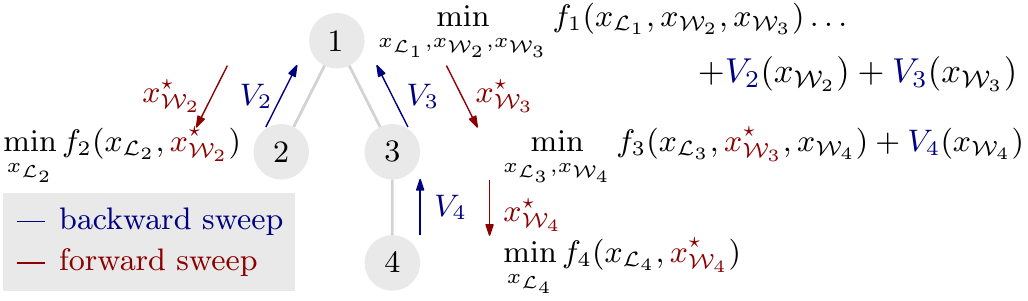}
	\caption{Classic serial DP example.}
	\label{fig:dpsweep}
\end{figure}

\begin{algorithm}[t]
	\SetAlgoLined
	\caption{Feasibility-Preserving ADP} \label{alg:appDP}
	\BlankLine
	\textbf{Initialize} $\mathcal H \doteq \mathcal L$ \\
	Backward sweep: \While{\textnormal{$\mathcal H \neq \{1\}$}}{
		1) Compute  $\mathcal P_i$ via \eqref{eq:proj}, and  $\tilde  V_i \approx \bar V_i$ for all $i \in \mathcal H$ and send them to parent vertices $ j\in \operatorname{par}(\mathcal H)$. \\
		2) Include all  $(\tilde V_i,\mathcal P_i)$ in $\bar V_j$ (eq. \eqref{eq:ValFunRef1}) of parent vertexs  $ j\in \operatorname{par}(\mathcal H)$ and compute  $\bar {\mathcal X}_j$. \\
		3) Set $\mathcal H \leftarrow \operatorname{par}(\mathcal H)$.}
	Forward sweep: \While{\textnormal{$\mathcal H \neq \mathcal L$}}{
		1) Solve  $\bar  V_j$ for all $j \in \mathcal H$. \\
		2) Distribute optimal coupling variables $\Ae{\bar x_{\mathcal W_j}^\star}$ to all children $i \in \mathcal C_j$\\
		3) Set $\mathcal H \leftarrow \operatorname{chld}(\mathcal H)$.}
	\textbf{Return} \Ae{$\bar x_{\mathcal I_i}^\star$ for all $i\in \mathcal S$}.
	\BlankLine
\end{algorithm}
\begin{figure}[t]
	\centering
	\includegraphics[width=\linewidth]{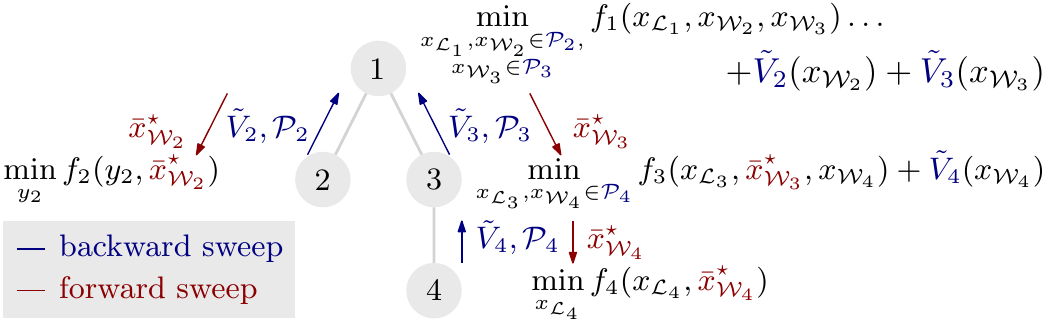}
	\caption{Feasibility-preserving ADP example. }
	\label{fig:FPdpsweep}
\end{figure}

\subsection{Classic (nonserial) Dynamic Programming } \label{sec:classDP}
Next, we recall classic \Ae{(nonserial)} dynamic programming \Ae{ for problems without constraints, which} solves~\eqref{eq:ValFunRef0} in one backward and one forward sweep along the tree:
In the backward sweep, start at leaves $l \in \mathcal L$ and compute an expression for all $V_l$.
Communicate all  $V_l$ to parent vertices, which include them to their objective, cf. problem~\eqref{eq:ValFunRef0}. 
This step is repeated recursively until the root vertex is reached. 
In the forward sweep, start at the root vertex, solve~\eqref{eq:ValFunRef0}, and communicate $\{\Ae{x_{\mathcal W_j}^\star}\}$ to all children $j \in \mathcal C_1$.
At all children $j \in \mathcal C_1$, solve~\eqref{eq:ValFunRef0} for $\{\Ae{x_{\mathcal W_j}^\star}\}$ fixed and repeat until all leaves $l \in \mathcal L$ are reached.
Classic dynamic programming is summarized in \autoref{alg:classDP}.
\Ae{\autoref{fig:dpsweep} illustrates the DP algorithm for \autoref{ex:coup} (right). }

Note that proving the optimality of the result of  \autoref{alg:classDP} \Ae{directly follows from the equivalence of \eqref{eq:sepNLP1} and \eqref{eq:ValFunRef0}.}
By this equivalence, \autoref{alg:classDP} can be interpreted as solving  \eqref{eq:sepNLP1} in a particular order starting from the  leaves.

\begin{remark}[Serial and nonserial dynamic programming]
	We remark that classic dynamic programming was originally designed  for path graphs, i.e. for serial decision processes without branching stemming from optimal control of dynamic systems \cite{Bellman1957,Bertsekas2017}.
	These approaches have been extended to more general tree structures in the 1960s and 1970s~ \cite{Bertele1972,Esogbue1974}.
	They have been  revisited recently in the context of decomposition methods for tree-structured  problems \cite{Jiang2021}, \cite[Chap 7.1]{Vandenberghe2015}. \hfill $\square$
\end{remark}

\subsection{Evaluation and communication of $V_i$} \label{rem:propV}
	Observe that evaluating $V_i$ is    costly in general since it involves solving an optimization problem.
	\TF{If it is possible to derive an explicit expression for $V_i$,}  it is in general difficult to handle in the context of numerical computation since it is a piecewise-continuous, nonlinear, nonconvex, extended real-valued function in general~\cite{Fiacco1990}.
	Except for special cases such as Quadratic Programs (QPs) or Linear Programs (LPs) \cite{Bemporad2000,Herceg2013,Keshavarz2014,Narciso2022}, however, it is  \TF{often intractable to compute such expressions.}
	One remedy are smoothing techniques via log-barrier functions  rendering $V_i$ differentiable   \cite{DeMiguel2008,Yoshio2021}.
	This way, one can  at least compute sensitivities of $V_i$, which in turn can be used for solving the parent problem.
\Ae{Eventually, this leads to \emph{primal decomposition} algorithms, cf. \cite{Tu2021,Engelmann2022b}.}

	Approximate Dynamic Programming~(ADP) schemes alleviate the above bottleneck by replacing  $V_i$ with suitable function approximators $ V_i \approx \tilde V_i$ such as neural networks or quadratic fits \cite{Bertsekas2017,Powell2011,Jiang2021}.
	\Ae{Piecewise-constant \cite{silva2018} and piecewise-linear \cite{Capitanescu2023} approximations based on samples are also possible.}
	However, ensuring feasibility with these techniques seems to be difficult.	
	\TF{As mentioned in the introduction, some DP schemes construct $\tilde V$ and its domain iteratively, e.g. in SDDP or Bender's decomposition via optimality and feasibility cuts \cite{Flamm2018, Lohndorf2013,Philpott2008,Alguacil2000,Fullner2021}. 
	However, these schemes do not have  one-shot feasibility properties.}

\section{Feasibility-preserving \\ Approximate Dynamic Programming} \label{sec:FP-ADP}

Ensuring feasibility when using valuefunction approximations seems to be difficult.
Next, we design a Feasibility-Preserving Approximate Dynamic Programming~(FP-ADP) scheme, which ensures feasibility and approximate optimality in one forward-backward sweep.
\Ae{For doing so, we use  concepts from variational analysis  \cite{Rockafellar1998a,Mordukhovich2005}. }

\subsection{Considering Constraints}
\Ae{Considering constraints $x_{\mathcal I_i} \in \mathcal X_i$ in  problem \eqref{eq:sepNLP1} implies that all $f_i$ are extended real-valued on $\mathbb R ^{n_{xi}}$.
	In this case,  constraint sets $\mathcal X_i$ are considered via indicator functions, i.e. $f_i \doteq l_i + \iota_{\mathcal X_i}$, where $l_i$ is the objective function without constraints and 
	\begin{align*}
		\iota_{\mathcal X_i}(x_{\mathcal I_i}) \doteq
		\begin{cases}
			0 &\text{ if }x_{\mathcal I_i} \in \mathcal X_i \\
			\infty &\text{ else}
		\end{cases}
	\end{align*}
	is the \emph{indicator function}.
\TF{Thus, the issue of determining whether a point $x_{\mathcal I_i}$ is feasible can equivalently be considered as the question of whether the condition  $f_i(x_{\mathcal I_i}) < \infty$ holds, i.e. whether $x_{\mathcal I_i}$ is in the  \emph{domain} $\operatorname{dom}(f_i) \doteq \{x_{\mathcal I_i} \in \mathbb{R}^{n_{xi}} \;|\; f_i(x_{\mathcal I_i}) < \infty\}$, of $f_i$,  cf. \cite{Rockafellar1998a}.}

In order to guarantee feasibility in \autoref{alg:classDP}, the key idea is to express and communicate $\operatorname{dom}(V_i)$  in \eqref{eq:ValFunRef0} to the parent subsystem $j=\operatorname{par}(i)$ such that the parent problem can include this  explicitly as a constraint in its own $V_j$.
However, it is not immediately clear how to compute $\operatorname{dom}(V_i)$.
The next lemma shows that $\operatorname{dom}(V_i)$ can be computed via  set projections.
\Ae{To this end, recall the} \emph{orthogonal projection} of a set $\mathcal X \subseteq \mathcal Z \times \mathcal Y \subseteq \mathbb{R}^n \times \mathbb R^m$ on $\mathcal Z$ \cite{Rakovic2006} given by
\begin{align*}
	\operatorname{proj}_{\mathcal Z}(\mathcal X) \doteq \{ z \in \mathcal Z  \;|\;\exists \; y \in \mathcal Y \text{ with  } (z,y) \in \mathcal X\}.
\end{align*}
Similarly,  we define the  orthogonal projection  of a set $\mathcal X  \subseteq \mathbb{R}^n$ on the components indexed by  $\mathcal I \subseteq \{1,\dots,n\} $ as
\begin{align*}
	\operatorname{proj}_{\mathcal I}(\mathcal X) \doteq \left \{ x_{\mathcal I} \subseteq \mathbb{R}^{|\mathcal I|} \;|\;\exists \; x_{\{1,\dots,n\} \setminus \mathcal I} \text{ with  } x \in \mathcal X \right \} .
\end{align*}
}

\Ae{\begin{lemma}[Computing domains via projection] \label{lem:domVi}
	\Ae{	Consider  
			\begin{align} 
 V(z) &\doteq \min_y F(y,z),  \label{eq:valFunProb3}
			\end{align}
		where   $ F: \mathbb{R}^{n_y}\times \mathbb{R}^{n_z} \rightarrow \mathbb{R}_{\infty}$, and assume that the minimum exists.\footnote{\Ae{In order to avoid technicalities, we assume the existence of minimizers at several occasions. This assumption can be replaced by appropriate continuity requirements on $f_i$ and compactness properties on $\operatorname{dom}(f_i)$, cf. \cite[Chap 1.C]{Rockafellar1998a}.}}
		Then,  
		$	\operatorname{dom}(V) = \operatorname{proj}_{\mathcal Z}{(\operatorname{dom}(F))}$,
		where $\operatorname{dom}(F) \doteq \{(y,z)\;|\; F(y,z) < \infty\}$. \hfill $\square$
\end{lemma}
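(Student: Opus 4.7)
The plan is to prove the set equality $\operatorname{dom}(V) = \operatorname{proj}_{\mathcal Z}(\operatorname{dom}(F))$ by two inclusions, essentially by unfolding the definitions of the domain and the projection. Both directions hinge on the assumption that the minimum in \eqref{eq:valFunProb3} is attained, which lets us move between the value $V(z)$ and a specific point $(y,z) \in \operatorname{dom}(F)$ without having to reason about infima and approximating sequences.

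First I would show $\operatorname{dom}(V) \subseteq \operatorname{proj}_{\mathcal Z}(\operatorname{dom}(F))$. Pick an arbitrary $z \in \operatorname{dom}(V)$; by definition of the domain this means $V(z) < \infty$. Using the standing assumption that the minimum exists, there is some $y^\star \in \mathbb{R}^{n_y}$ with $F(y^\star, z) = V(z) < \infty$, so $(y^\star, z) \in \operatorname{dom}(F)$. Projecting onto $\mathcal Z$ yields $z \in \operatorname{proj}_{\mathcal Z}(\operatorname{dom}(F))$.

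For the reverse inclusion, take any $z \in \operatorname{proj}_{\mathcal Z}(\operatorname{dom}(F))$. By the definition of $\operatorname{proj}_{\mathcal Z}$ there exists some $\tilde y$ with $(\tilde y, z) \in \operatorname{dom}(F)$, i.e.\ $F(\tilde y, z) < \infty$. Since $V(z) = \min_y F(y,z) \leq F(\tilde y, z) < \infty$, we conclude $z \in \operatorname{dom}(V)$.

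The only real subtlety is the first direction, where I rely on the attainment of the minimum so that a concrete $y^\star$ with finite value exists; otherwise one would need to fall back on a sequence $y^{(k)}$ with $F(y^{(k)}, z) \to V(z)$ and argue that some tail index yields $F(y^{(k)}, z) < \infty$, which is immediate from $V(z) < \infty$ but notationally heavier. Under the stated attainment assumption (echoed in the footnote of the lemma), the argument reduces to definition-chasing and no topological or convexity hypothesis on $F$ is needed. I would therefore state the proof as the short two-inclusion argument above, emphasising where the attainment of the minimum enters.
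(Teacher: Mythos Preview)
Your proposal is correct and matches the paper's own proof essentially line for line: both argue the two inclusions by unfolding the definitions, using attainment of the minimum for $\operatorname{dom}(V)\subseteq\operatorname{proj}_{\mathcal Z}(\operatorname{dom}(F))$ and the trivial bound $V(z)\leq F(\tilde y,z)$ for the reverse direction. Your write-up is in fact slightly more explicit about where the attainment assumption enters and about the inequality in the second inclusion, but there is no substantive difference.
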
}
\begin{proof}
\Ae{See Appendix~\ref{sec:prelim}.}
	\end{proof}
}

\Ae{The above lemma allows to rewrite minimization problems in form of \eqref{eq:valFunProb3} as
	\begin{align} \label{eq:projProb2}
		\min_z \; V(z) = \min_{z \in \mathcal P} V(z), 
	\end{align} 
	where
	 \begin{align} \label{eq:Pi}
	 \mathcal P \doteq \operatorname{proj}_{\mathcal Z}{(\operatorname{dom}(F))}  = \operatorname{dom}(V)
	\end{align}
	Moreover, it allows to formulate feasibility-preserving approximations of \eqref{eq:valFunProb3},
	\begin{align*} 
		\min_{z \in  \mathcal P} \tilde V(z),
	\end{align*} 
	where $\tilde V \approx V$.
	Furthermore, one can combine the above with inner approximations $\tilde {\mathcal P} \subseteq \mathcal P$	yielding
		\begin{align} \label{eq:valFunProb4}
		\min_z \; V(z) \approx\min_{z \in \tilde {\mathcal P}} \tilde V(z),
		\end{align} 
	 the solution of which---if it exists---is again feasible in terms of problem \eqref{eq:valFunProb3}.
	This implies that we can combine the reformulation \eqref{eq:valFunProb3} with a very general class of  real-valued approximations $\tilde V$ and inner approximations $\tilde {\mathcal{P}}$, while guaranteeing feasibility in terms of \eqref{eq:valFunProb3}.
		We summarize these observations in the following theorem.}
	\begin{theorem}[Constraint consideration via projections]  \label{thm:feasAndOpt}
	\Ae{Consider $V$ from  \eqref{eq:valFunProb3}, the problems \eqref{eq:projProb2} and \eqref{eq:valFunProb4}
	with $\mathcal P$ from \eqref{eq:Pi}, and some $\tilde V: \mathcal P \rightarrow \mathbb R$.
	Assume that the minimizers  in  \eqref{eq:projProb2}--\eqref{eq:valFunProb4} exist.
	Then,  $z^\star$ is a minimizer to   \eqref{eq:valFunProb3}  if and only if it is a minimizer to problem \eqref{eq:projProb2}.
	Moreover, any minimizer $\tilde z^\star$ to problem \eqref{eq:valFunProb4} is feasible in  \eqref{eq:valFunProb3}, i.e. $V(\tilde z^\star) < \infty$.} \hfill $\square$
	\end{theorem}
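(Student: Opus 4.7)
The plan is to leverage \autoref{lem:domVi} together with the basic fact that any minimizer of an extended real-valued function must lie in its effective domain (provided the function is not identically $+\infty$, which is guaranteed by the existence assumption). The proof splits naturally into two parts mirroring the two claims of the theorem.

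For the equivalence between \eqref{eq:valFunProb3} and \eqref{eq:projProb2}, I would first apply \autoref{lem:domVi} to identify $\mathcal P = \operatorname{proj}_{\mathcal Z}(\operatorname{dom}(F)) = \operatorname{dom}(V)$. Then the two problems have identical objective function but different feasible sets; however, the additional constraint $z \in \mathcal P$ in \eqref{eq:projProb2} merely removes points where $V(z)=\infty$. Formally, any candidate minimizer $z^\star$ of \eqref{eq:valFunProb3} satisfies $V(z^\star) < \infty$ (since a minimizer exists and $V \not\equiv \infty$), so $z^\star \in \operatorname{dom}(V) = \mathcal P$, i.e. $z^\star$ is feasible in \eqref{eq:projProb2} with the same objective value. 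Conversely, any $z^\star$ feasible in \eqref{eq:projProb2} is feasible in \eqref{eq:valFunProb3} (which is unconstrained), and since $V(z) = \infty$ for $z \notin \mathcal P$, the infima coincide, hence minimizers coincide.

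For the feasibility claim regarding \eqref{eq:valFunProb4}, the argument is even more direct: by the inner-approximation assumption $\tilde{\mathcal P} \subseteq \mathcal P$, any minimizer $\tilde z^\star$ of \eqref{eq:valFunProb4} satisfies $\tilde z^\star \in \tilde{\mathcal P} \subseteq \mathcal P = \operatorname{dom}(V)$, and therefore $V(\tilde z^\star) < \infty$, which is precisely feasibility in \eqref{eq:valFunProb3}.

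There is essentially no deep obstacle here; the entire weight of the theorem rests on \autoref{lem:domVi}, which has already been proved. The only point worth stating carefully is the role of the existence-of-minimizer hypothesis (footnote to \autoref{lem:domVi}), since without it one could have $V \equiv \infty$ in which case the first claim becomes vacuous and the second meaningless. I would therefore explicitly invoke this hypothesis at the start of each of the two parts so that the domain $\mathcal P$ is nonempty and the argument ``$V(z^\star) < \infty$ hence $z^\star \in \mathcal P$'' is legitimate.
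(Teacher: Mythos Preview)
Your proposal is correct and follows essentially the same route as the paper: the first claim is obtained from \autoref{lem:domVi} together with the elementary fact that $\inf_z V(z)=\inf_{z\in\operatorname{dom}(V)}V(z)$, and the second claim follows immediately from $\tilde z^\star\in\tilde{\mathcal P}\subseteq\mathcal P=\operatorname{dom}(V)$. The only difference is that you spell out both directions of the equivalence explicitly, whereas the paper compresses this into a one-line citation of the identity $\inf_x f(x)=\inf_{x\in\operatorname{dom}(f)}f(x)$.
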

	\begin{proof}
		Since all minimizers exist, we can use $\inf$ and $\min$ interchangeably.
		The first statement  follows from  $\inf_x f(x) = \inf_{x \in \operatorname{dom}(f)}f(x)$ \cite[Chap 1.A]{Rockafellar1998a} and \autoref{lem:domVi}. 
		For the second statement, note that by assumption a minimizer $\tilde x^\star$ exists and thus  $\tilde x^\star \in \mathcal P$.
		Since $\tilde {\mathcal P} \subseteq \mathcal P$, we have $\tilde x^\star \in \operatorname{dom}(V)$.
		\end{proof}
\subsection{Feasibility-Preserving ADP}

We use the above result to design a Feasibility Preserving ADP scheme, which ensures feasibility although using value function approximations.
\Aee{
To match the requirements of \autoref{lem:domVi}, we introduce 
\begin{align} \label{eq:Fi}
	  F_i\big (x_{\mathcal L_i},\{x_{\mathcal W_i}&\}_{j \in \mathcal C_i \cup \{i\}}\big ) \\
&\quad \doteq
  \;f_i\left (x_{\mathcal L_i},\{x_{\mathcal W_i}\}_{j \in \mathcal C_i \cup \{i\}}\right ) +
  \sum_{j \in \mathcal{C}_i} V_j(x_{\mathcal W_j}). \notag
\end{align}
Observe that $F_i$ is the objective of problem \eqref{eq:ValFunRef0}, i.e. $V_i(x_{\mathcal W_i}) =	\min_{x_{\mathcal L_i},  \{x_{\mathcal W_j}\}_{j \in \mathcal C_i}} F_i (x_{\mathcal L_i},\{x_{\mathcal W_i}\}_{j \in \mathcal C_i \cup \{i\}} )$.}

In the backward sweep, start at leaves $i \in \mathcal L $. 
\Ae{Recall that $\mathcal W_i$ is the index set of coupling variables of subsystem $i \in \mathcal S$ with its parent.}
\Aee{Using \autoref{lem:domVi}, we define}
\begin{align} \label{eq:proj}
	\mathcal P_i \doteq\Aee{\operatorname{dom}(V_i)=} \Ae{\operatorname{proj}_{\mathcal W_i}( \operatorname{dom}(\Aee{F_i}))},
\end{align}
\Ae{i.e. the projection of the constraint set $\mathcal X_i=\operatorname{dom}(F_i)$ of subsystem $i\in \mathcal S$ onto the coupling variables with its parent.}
\Ae{The central idea of FP-ADP is to}  combine the feasible sets of all  \TF{children  $i\in \mathcal C_i$  via the projections $\mathcal P_i$} or approximations thereof. 
\Ae{To this end, we approximate problem \eqref{eq:ValFunRef0} via}
\begin{align}\label{eq:ValFunRef1}
 \bar V_j(x_{\mathcal W_j})  \doteq &\min_{\substack{x_{\mathcal L_j},  \\ \{x_{\mathcal W_i}\}_{i \in \mathcal C_j}}}   f_j(x_{\mathcal L_j}, \{x_{\mathcal W_i}\}_{i \in \mathcal C_j \cup \{j\}}) +  \sum_{i \in \mathcal{C}_j} \tilde V_i(x_{\mathcal W_i}) \notag\\
& \text{subject to }\quad \Ae{ x_{\mathcal W_i} \in \mathcal P_i \text{ for all } i\in \mathcal C_j},
\end{align}
\Ae{where $\tilde V_i: \mathcal P_i \rightarrow \mathbb R$ are real-valued approximations of $\bar V_i$, \Aee{and $\bar F_i$ is defined analogously to  \eqref{eq:Fi}.}
Observe the subtle differences between $V_i, \tilde V_i,$ and $\bar V_i$: 
$V_i$ is the exact value function for all subsystems as defined in~\eqref{eq:ValFunRef0}.
$\bar V_i$ describes the {exact} value function   including approximations in the children $\tilde V_k, k \in \mathcal C_i$, and $\tilde V_i$ again approximates $\bar V_i$ in the parent subsystem $i \in \mathcal S$.}

Similar to classic dynamic programming from \autoref{sec:classDP}, we recursively compute and communicate $(\tilde V_j,\mathcal P_j)$ backwards until the root vertex is reached.
Observe that the root problem yields a set \Ae{$ {\mathcal P_1}$}, which ensures feasibility of all lower-level problems.
In the forward sweep, start at the root, solve $\bar V_1$ and communicate \Ae{ $\{\bar x_{{\mathcal W}_j}^\star\}$} to all children $j \in \mathcal C_1$.
At all children $j \in \mathcal C_1$, solve~$\bar V_j$ and repeat until all leaves $l \in \mathcal L$ are reached.
The FP-ADP scheme is summarized in \autoref{alg:appDP}.
\autoref{fig:FPdpsweep} illustrates the algorithm for \autoref{ex:coup}, where the edge \Ae{$\{1,3\}$} has been removed from the interaction graph.

\begin{theorem}[Feasibility in  \autoref{alg:appDP}] \label{thm:feasADP}
	Suppose that  the minimizers of all $\{\bar V_j\}_{j \in \mathcal S}$ in \autoref{alg:appDP} exist.
Then, \autoref{alg:appDP}  generates a feasible point to problem \eqref{eq:sepNLP1}.\hfill $\square$
\end{theorem}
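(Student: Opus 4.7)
The plan is to argue by induction along the tree, traversing vertices in the order of the forward sweep in \autoref{alg:appDP}, i.e.\ from the root $1$ toward the leaves $\mathcal{L}$. The key ingredient I would invoke at every step is the identity $\mathcal{P}_i = \operatorname{dom}(V_i)$, which is immediate from the definition~\eqref{eq:proj} together with \autoref{lem:domVi}. A short preliminary observation worth recording is that $\operatorname{dom}(\bar V_i) = \mathcal{P}_i$ as well: the explicit constraints $x_{\mathcal W_k}\in \mathcal P_k$ for $k \in \mathcal C_i$ in~\eqref{eq:ValFunRef1} exactly reproduce the hidden feasibility requirements that the exact $V_k$ would have imposed through its domain, while $\tilde V_k$ is real-valued on $\mathcal P_k$ by construction and therefore does not further shrink the effective domain.

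For the base case at the root, the assumed existence of a minimizer of $\bar V_1$ produces $\bar x_{\mathcal L_1}^\star$ and $\{\bar x_{\mathcal W_i}^\star\}_{i \in \mathcal C_1}$ with $f_1(\bar x_{\mathcal L_1}^\star,\{\bar x_{\mathcal W_i}^\star\}_{i \in \mathcal C_1 \cup \{1\}}) < \infty$, i.e.\ $\bar x_{\mathcal I_1}^\star \in \operatorname{dom}(f_1)$, and the explicit constraint in~\eqref{eq:ValFunRef1} enforces $\bar x_{\mathcal W_i}^\star \in \mathcal P_i$ for each child $i \in \mathcal C_1$. For the inductive step, consider a non-root vertex $j$ and suppose the value $\bar x_{\mathcal W_j}^\star$ received from $\operatorname{par}(j)$ lies in $\mathcal P_j$, which holds by the previous step. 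Because $\operatorname{dom}(\bar V_j)=\mathcal P_j$, the restricted problem is feasible, and the hypothesis of the theorem supplies a minimizer. Repeating the root argument at $j$ then yields $\bar x_{\mathcal I_j}^\star \in \operatorname{dom}(f_j)$ together with $\bar x_{\mathcal W_i}^\star \in \mathcal P_i$ for every child $i \in \mathcal C_j$, closing the induction. Since every vertex is eventually visited, $f(\bar x^\star) = \sum_{i \in \mathcal S} f_i(\bar x_{\mathcal I_i}^\star) < \infty$, which is precisely feasibility in~\eqref{eq:sepNLP1}.

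The main obstacle I anticipate is the preliminary identity $\operatorname{dom}(\bar V_j)=\mathcal P_j$: one has to verify carefully that the explicit projection constraint in~\eqref{eq:ValFunRef1} neither enlarges nor shrinks the effective domain relative to~\eqref{eq:ValFunRef0}, which is essentially the content of \autoref{lem:domVi} and \autoref{thm:feasAndOpt} applied recursively up the tree. Consistency across shared variables poses no additional difficulty, since $\mathcal W_i$ refers to the same components of the global vector $x$ for both parent and child, so the forward sweep automatically assembles a single well-defined $\bar x^\star$.
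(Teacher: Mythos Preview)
Your proposal is correct and follows essentially the same approach as the paper: induction along the tree, using that the explicit constraint $x_{\mathcal W_i}\in\mathcal P_i$ in~\eqref{eq:ValFunRef1} reproduces the implicit domain constraint of the exact value function, so feasibility propagates from the root to the leaves in the forward sweep. Your version is somewhat more explicit---you isolate the identity $\operatorname{dom}(\bar V_j)=\mathcal P_j$ and spell out the forward induction, whereas the paper first does a short backward step for nonemptiness of the $\mathcal P_j$ and then compresses the forward argument into a single appeal to the definition of the projection---but the substance is the same.
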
 
\begin{proof}
Since \Ae{the minimizers of all $\{\bar V_j\}_{j \in \mathcal S}$ exist,}  $\Ae{\operatorname{dom}(\bar V_l)} \neq \emptyset$  for $l \in \mathcal L$. 
By  definition of the set projection \Ae{and \eqref{eq:proj}}, this implies that  $\Ae{\mathcal P_j}\neq \emptyset$ for all $j = \operatorname{par}(l)$.
Induction gives that   $\Ae{\mathcal P_j}\neq \emptyset$ for all $j \in \mathcal S$. 
Hence, $\Ae{\bar x_{\mathcal W_j}^\star }\in \arg \min \bar V_j(z_j)$ exists for all $j \in \mathcal S$ by assumption and\Ae{, thus, in particular the minimizer of $\bar V_1$ exists.  \Ae{The definition of the set projection implies that $\bar x ^\star_{\mathcal I} \in \operatorname{dom}(f)$.}}
\end{proof}
\autoref{thm:feasADP} can be  extended to the case, where $\mathcal P_j$ is replaced by a (possibly more tractable) inner approximation
$
	\tilde{\mathcal P_j} \subseteq \mathcal P_j.
$
\begin{corollary}[Feasibility  with inner  approximation] \label{thm:feasADPappr}
	Assume that the assumptions from \autoref{thm:feasADP} hold.
Replace $\mathcal P_j$ by inner approximations $\mathcal {\tilde P}_j \subseteq \mathcal P_j$ such that $\Ae{\operatorname{dom}(\bar V_i) \neq \emptyset}$ for all $i \in \mathcal S$. 
Then, \autoref{alg:appDP} is well-defined and generates a feasible point  $\bar x^\star_{\mathcal I} \in \Ae{\operatorname{dom}(f)}$. \hfill $\square$
\end{corollary}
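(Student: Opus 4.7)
The plan is to adapt the proof of Theorem~\ref{thm:feasADP} with only minor modifications, exploiting two facts: (i) the nonemptiness condition $\operatorname{dom}(\bar V_i) \neq \emptyset$ for all $i \in \mathcal S$ is now an explicit assumption, so there is nothing to propagate backwards via projections in order to rule out empty feasible sets; and (ii) the inclusion $\tilde{\mathcal P}_j \subseteq \mathcal P_j = \operatorname{dom}(V_j)$ transfers feasibility with respect to the inner approximation to feasibility with respect to the original problem. The reasoning thus splits naturally into well-definedness (backward sweep) and feasibility of the final output (forward sweep).

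First I would argue well-definedness: combining the standing assumption on nonemptiness of $\operatorname{dom}(\bar V_i)$ with the assumption that the minimizers of all $\{\bar V_j\}_{j\in\mathcal S}$ exist, each subproblem in the backward sweep of Algorithm~\ref{alg:appDP} yields a finite value, and each associated inner approximation $\tilde{\mathcal P}_j$ is nonempty (otherwise the parent problem would have empty domain, contradicting the hypothesis). Hence every communication step in the backward sweep and every optimization step in the forward sweep is well-posed.

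Next, I would carry out an induction along the tree from the root to the leaves to establish feasibility. At the root, the minimizer of $\bar V_1$ produces optimal coupling variables $\bar x^\star_{\mathcal W_i}$ for each $i\in\mathcal C_1$ which, by the constraints of~\eqref{eq:ValFunRef1} (with $\mathcal P_i$ replaced by $\tilde{\mathcal P}_i$), satisfy $\bar x^\star_{\mathcal W_i}\in\tilde{\mathcal P}_i$. The inclusion $\tilde{\mathcal P}_i\subseteq\mathcal P_i=\operatorname{dom}(V_i)$ then places these values in $\operatorname{dom}(V_i)$, so by Theorem~\ref{thm:feasAndOpt} the corresponding child subproblem admits a feasible solution when its coupling variables are fixed to $\bar x^\star_{\mathcal W_i}$. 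Repeating the same step at every vertex propagates feasibility down the tree until the leaves are reached, yielding $\bar x^\star_{\mathcal I}\in\operatorname{dom}(f)$.

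The main subtlety I expect to encounter is keeping the roles of $V_i$, $\bar V_i$, and $\tilde V_i$ clearly separated: feasibility is a property of the domains $\operatorname{dom}(F_i)$ only, and is therefore insensitive to the cost approximations $\tilde V_i$. Once this is observed, Theorem~\ref{thm:feasAndOpt} does essentially all the work, and the corollary follows as a clean consequence of Theorem~\ref{thm:feasADP} with $\mathcal P_j$ consistently replaced by $\tilde{\mathcal P}_j$, while leveraging the inclusion chain $\tilde{\mathcal P}_j\subseteq\mathcal P_j\subseteq\operatorname{dom}(V_j)$ at each step.
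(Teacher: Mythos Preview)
The paper states this corollary without a separate proof, treating it as an immediate consequence of Theorem~\ref{thm:feasADP}; your proposal spells out precisely that adaptation and is correct. The two observations you isolate---that nonemptiness of $\operatorname{dom}(\bar V_i)$ is now assumed rather than derived, and that the inclusion $\tilde{\mathcal P}_j\subseteq\mathcal P_j=\operatorname{dom}(V_j)$ carries feasibility through the forward sweep---are exactly what the paper's terse treatment relies on implicitly.
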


\section{Computational Aspects} \label{sec:comp}

In the following section, we describe ways of \Ae{computing and parametrizing} $\mathcal P_i$ and $\tilde {\mathcal P}_i$ such that they can be used efficiently in parent optimization problems.
\Ae{We will use these computations in the numcerical examples of \autoref{sec:caseStudies}.
	This way, we show that the described FP-ADP schemes are indeed implementable in practice.}
We do not consider the computation of value function approximations $\tilde V_i$, cf.  \autoref{rem:propV} for comments.
\Ae{For  simplicity, we set $z_i\doteq x_{\mathcal{W}_i}$ and $y_i\doteq (x_{\mathcal{L}_i}\Aee{,\{x_{\mathcal W_j}\}_{i\in \mathcal C_i})}$, and we drop the subscript $i$ in the following.}

\subsection{Computing Exact Projections } \label{sec:Pexact}
If child problems have convex polyhedral constraint sets
\begin{subequations}\label{eq:polCstr}
	\begin{align} 
 \Aee{\operatorname{dom}(F)} \doteq \big \{(z,y) \;|\; [A_z \;\; A_y][z^\top\;\; y^\top ]^\top&=b, \\
[B_z \;\; B_y][z^\top\;\; y^\top ]^\top&\leq d\big \},
\end{align}
\end{subequations}
\Ae{then $\mathcal P = \operatorname{proj}_{ \mathcal Z}{\Ae{(\operatorname{dom}(F))}}$ can be  computed \emph{exactly}.}
In this case, $\mathcal P$ is a convex polyhedron of the form $\mathcal P=\{ z \;|\; \bar A_z z = 0, \;\; \bar B_z z\leq 0\}$, which can be efficiently included in and communicated to the parent optimization problem.
Algorithms such as Fourier-Motzkin elimination or vertex enumeration can be used to compute $\mathcal P$.
Depending on the dimensions of $y$ and $z$, these algorithms vary with respect to their efficiencies.
We refer to \cite{Jones2005,Jones2008} and references therein for an overview, and to \cite{benoit2023,Fukuda1996} for implementations.

\subsection{Computing Inner Approximations} \label{sec:PinnerAppox}
\TF{For  sets $\operatorname{dom}(F)$ with complex topology} it is often computationally intractable to obtain $\mathcal P$ exactly.
\Ae{However, as shown in Theorem~\autoref{thm:feasADPappr}, non-empty and not overly conservative inner approximation  $\tilde {\mathcal P} \subseteq \mathcal P$ suffice to ensure feasibility in FP-ADP. } 
Moreover,  using an inner approximation can help to simplify the representation of $\mathcal P$, e.g., if $\mathcal P$ is a convex polyhedron  described by a large number of halfspaces.

\subsubsection*{Sampling-Based Methods}
Consider feasible sets of NLPs 
\begin{align} \label{eq:NLPset}
  \Aee{\operatorname{dom}(F)}\doteq \{ (z,y) \in \mathbb R^{n_x} \;|\; g(z,y)=0, \;h(z,y) \leq 0\}.
\end{align} 
\Ae{One possibility to approximate $\mathcal P = \operatorname{proj}_{ \mathcal Z}{(\operatorname{dom}(F))}$ are sampling-based heuristics often used in power system applications \cite{Krause2009,capitanescu2018, silva2018,MayorgaGonzalez2018,Sarstedt2021}.
These methods   \Ae{sample $T\in \mathbb N$} points  $\{\mathsf z_l\}_{l \in \mathcal T=\{1,\dots,T\}} \in \mathcal P$ based on which $\tilde {\mathcal P}$ is constructed.}

\TF{
One of these methods starts with a set of random samples  $\{\mathsf y_l\}_{l \in \mathcal T}$ with $\mathsf y_l \in \mathcal Y$.
Then, $g(\mathsf z_l,\mathsf y_l)=0$ is solved numerically, which  yields a sequence $\{\mathsf z_l\}_{l \in \mathcal T}$.
Neglecting all $l \in \tilde  {\mathcal T} \subset \mathcal T$ for which $h(\mathsf z_l,\mathsf y_l) > 0$ holds,   guarantees a posteriori  that $\mathsf z_l\in \mathcal P$ for all $l \in \mathcal T \setminus \tilde {\mathcal T}$ \cite{MayorgaGonzalez2018, heleno2015}.}
\Ae{This approach is summarized in \autoref{alg:discMeth1}.}

More recent methods explore the boundary of $\mathcal P$ more systematically in order to avoid excessive sampling.
Boundary points $\{\mathsf z_l\}$ are computed by a sequence of auxiliary optimization problems $l \in \mathcal T$
\begin{align} \label{eq:boundaryProb}
\mathsf z_l = \arg \min_{z,y} c_l\, z\;  \text{ subject to } \; g(z,y)=0, \; h(z,y) \leq 0,
\end{align}
with varying costs coefficients typically chosen to be $c_l \in \{-1,0,1\}^{n_z}$ 
\cite{capitanescu2018,silva2018}.
These methods can be extended by using $\{\mathsf z_l\}_{l \in \mathcal T}$ \TF{as a base set of points and adding further ones via gridding.}
Specifically, assume that the $m$th component of $\mathsf z$, $[\mathsf z]_m$ is bounded by  $\underline  {\mathsf z}_l \leq [\mathsf z_l]_m \leq \bar {\mathsf z}_l$ for all $l \in \mathcal T$.
Then, one can fix these components for a finite set of gridding points in the above range and re-solving \eqref{eq:boundaryProb} for the corresponding components fixed in order to get a finer sampling of the boundary of $\mathcal P$, cf. \cite{Sarstedt2021} and references therein.
This  method is detailed in \autoref{alg:discMeth2}.

\begin{algorithm}[t] 
	\SetAlgoLined 
	\caption{\Aee{Decision Variable Sampling} } \label{alg:discM}
	\BlankLine
	\textbf{Initialize} \Aee{$\operatorname{dom}{(F)}$} in form of  \eqref{eq:NLPset}. \\
	1) Choose a set of samples $\{\mathsf y_l\}_{l \in \mathcal T}$ with $\mathsf y_l \in \mathcal Y$.  \\
	2) Solve $g(\mathsf z_l,\mathsf y_l)=0$ numerically to obtain $\{\mathsf z_l\}_{l \in \mathcal T}$.\\
	3) Neglect infeasible points  $\tilde {\mathcal T} =\{l \;|\; h(\mathsf z_l,\mathsf y_l)>0\}$. \\
	\textbf{Return} $\{\mathsf z_l\}_{l \in \mathcal T \setminus \tilde {\mathcal T}},\; \mathsf z_l \in \mathcal P $.
	\BlankLine \label{alg:discMeth1}
\end{algorithm}

Typically it is not desirable to use \Ae{the set of samples}  $\{\mathsf z_l\}_{l \in \mathcal T} $ as \Ae{constraints on the coupling variables}  in a parent optimization problem since that would lead to a mixed-integer problem.
Hence, the samples are often used to compute an  approximation $\tilde {\mathcal P}$, e.g.  $ \tilde {\mathcal P} = \operatorname{conv}(\{\mathsf z_l\})$, \Ae{where $\operatorname{conv}(\cdot)$ denotes the \Ae{convex hull.}}
Alternatives are fitting shapes such as ellipsoids or convex polyhedra with maximal volume \cite{polymeneas2016}.
However, without further assumptions on $\mathcal P$ such as convexity or at least simply connectedness, the approximations constructed via the above methods are, \Ae{in general,  not guaranteed to satisfy  $\tilde {\mathcal P} \subseteq \mathcal P$ for constraint sets of NLPs \eqref{eq:NLPset}.  In turn, this may} jeopardize feasibility in the forward sweep of Algorithm~\ref{alg:appDP}.

\begin{figure*}
	\centering
	\includegraphics[width=0.9\linewidth]{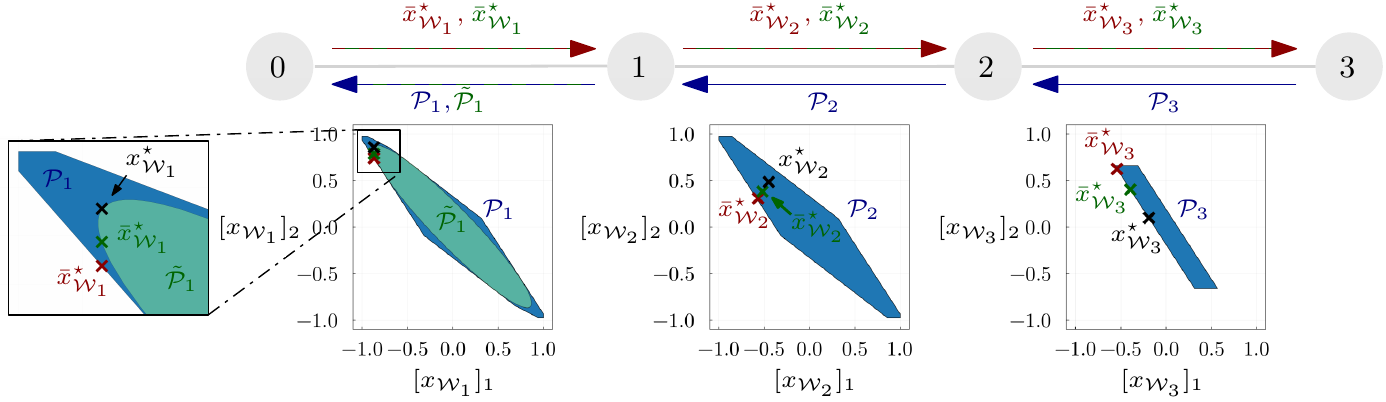}
	\caption{FP-ADP for OCP \eqref{eq:OCP}, where $\mathcal S$ are time instances and $G$ is a path graph.}
	\label{fig:ocpdp}
\end{figure*}

\subsubsection*{Constraint Simplification Methods}
Assume that one is able to compute a projection  described via inequalities  $\mathcal P = \{z\;|\; \tilde g(z) \leq 0\} \subset \mathcal P$, e.g. via the projection of polyhedra from \eqref{eq:polCstr}.
Suppose further that we would like to  simplify this description.
\Ae{Indeed,} methods from \emph{design centering} allow to compute inner approximation $\tilde {\mathcal P}\subseteq \mathcal  P$ \cite{Harwood2017,Stein2012,Djelassi2021,Jones2010,Bjornberg2006}.
\Ae{The pivotal idea 
is to choose an easily parametrizable shape} $\mathcal Q$ such as a polyhedron or an ellipsoid, and to set $\tilde {\mathcal P} \doteq \mathcal Q(v)$, where $v \in \mathcal V$ are design parameters.
Then, one tries to maximize the \textit{volume} of $\mathcal Q(v)$ with parameters from a design space $v \in \mathcal V$ such that $\mathcal Q(v) \subseteq \mathcal P$.
Thus, the design centering problem reads
\begin{align} \label{eq:desCtr}
\max_v \quad \operatorname{vol}(\mathcal Q(v) )	\text{ subject to }
\mathcal Q(v) \subseteq  \mathcal P, \; v \in \mathcal V.
\end{align}
As an example, assume that $\mathcal P$ is given by a convex polyhedron $\mathcal P=\{ z \;|\; \bar B_z z\leq 0\}$, for example from \eqref{eq:polCstr}.\footnote{Note that equality constraints in convex polyhedra can always be eliminated by a suitable coordinate transformation (a.k.a. the nullspace method) \cite[Chap. 16.2]{Nocedal2006}, \cite[Chap 10.1.2]{Boyd2004}.}
Moreover, assume that we would like to replace $\mathcal P$ by a simpler ellipsoidal inner approximation.
An ellipsoid can be described as a Euclidean unit ball under an affine mapping \cite[Chap 4.9]{Ben-Tal2001a}, i.e.
\begin{align*}
\mathcal Q^e(v) = \big \{ y = Au+c \;|\;   \|u\|_2^2 \leq 1,   \;A^\top \hspace{-.4em} =A \succ 0 \big \},
\end{align*}
where  $v= (A,c)$.
Then, we can solve \eqref{eq:desCtr} via the maximum volume inscribed ellipsoid problem
\begin{subequations} \label{eq:ellFitDet}
\begin{align} \label{eq:ellFit}
	\max_{(A= A^\top\succ 0,c)} \quad \operatorname{det} A&	\\
	\text{ s.t. } 	\big \|A[B_z]_i\big \|_2 + [B_z]_i c +  &\leq  [d]_i, \;  i = 1,\dots, N, \label{eq:robPolInq2} 
\end{align}
\end{subequations}
where $N$ is the number of rows of $B_z$.
Observe that \eqref{eq:ellFitDet} is a semidefinite program, cf. \cite[Chap 8.4.2]{Boyd2004} and \cite{Ben-Tal2001a}.
An application of this technique is illustrated in \autoref{sec:optCtrl}.

\Aee{
A set projection can equivalently be expressed as a linear mapping of a set, i.e. $\operatorname{proj}_{\mathcal Z}\mathcal X = P_{\mathcal Z}\mathcal X$ with $P\mathcal X \doteq \{Px\;|\; x\in \mathcal X\}$, where $P_{\mathcal Z}$ is an appropriately chosen projection matrix. Hence, the projection of an ellipsoid of the above form reads $P\mathcal Q^e=\{ y = PAu+c \;|\;   \|u\|_2^2 \leq 1,   \;A^\top \hspace{-.4em} =A \succ 0 \big \}$ and is thus reduces to a matrix multiplication.
The same holds true for zonotopes, which can be used as an alternative \cite{Scott2016}.}

\section{Examples} \label{sec:caseStudies}

\Ae{Next, we illustrate how to use the methods from the previous subsection in Algorithm~\ref{alg:appDP} for two  applications from optimal control and  power systems.} 

\TF{We use  \texttt{PowerModels.jl} for power system modelling \cite{Coffrin2018},  \texttt{Polyhedra.jl} \cite{benoit2023} for polyhedral projection, and  \texttt{Convex.jl} and \texttt{JuMP} with \texttt{Ipopt} \cite{Dunning2017,Wachter2006} for solving optimization problems.}

\begin{algorithm}[t] 
	\SetAlgoLined 
	\caption{\Aee{Optimization-based Sampling}} 
	\BlankLine
	\textbf{Initialize} \Aee{$\operatorname{dom}(F)$} in form of  \eqref{eq:NLPset}. \\
	1) Choose a set of sampling cost coefficients $\{c_l\}_{l \in \mathcal T}$. \\
	2) Solve \eqref{eq:boundaryProb} for all $l \in \mathcal T$. \\
	\textbf{Return} $\{\mathsf z_l\}_{l \in \mathcal T},\; \mathsf z_l\in \mathcal P$.
	\BlankLine \label{alg:discMeth2}
\end{algorithm}

\subsection{Optimal Control} \label{sec:optCtrl}
Consider a discrete-time Optimal Control Problem~(OCP)
\begin{subequations}\label{eq:OCP}
	\begin{align} 
		\min_{z,u} \sum_{t \in \mathbb T} &\ell(z_t,u_t) + V_T(z_t)\\
		\text{s.t. } \quad  		z_{t+1} &= f(z_t,u_t), \; z_0 = z(0),&& \forall t\in \mathbb T \setminus \{T\}, \label{eq:dyn}\\
		(z_t,u_t)  &\in \mathbb{Z} \times \mathbb{U},&&\forall t \in \mathbb  T \setminus \{T\}, \label{eq:StInCstr} \\
		z_t & \in \mathbb{Z}_T, \label{eq:termSet}
	\end{align}
\end{subequations}
where $\mathbb{T}\doteq\{0,\dots,T\}$.
OCP \eqref{eq:OCP} is in form of problem~\eqref{eq:sepNLP1} if we define the following:
 $\mathcal{S}\doteq\mathbb{T}$\footnote{\Ae{In contrast to the previous sections, we use  $0$ as the root of the interaction tree here in order to keep the established zero index for the initial condition.}}, $i\doteq t$, $\Ae{x_{\mathcal I_i}} \doteq (z_{i-1},z_i,u_i)$ for all $i\in \mathcal S \setminus \{0\}$, and  $\Ae{x_{\mathcal I_0}} \doteq (z_0,u_0)$.
\Ae{Furthermore, 
	$f_i(x_{\mathcal I_i}) \doteq \ell(z_i,u_i) + \iota_{\mathcal X_i}(x_{\mathcal I_i}),$
where
\begin{align*}
	 {\mathcal X}_i \doteq \left  \{x_{\mathcal I_i} \;|\; \eqref{eq:dyn} \text{ (left) and }  \eqref{eq:StInCstr} \text{ hold}\right\}
\end{align*}
for all $ i\in \{1,\dots,T-1\}$.
We consider $\mathcal X_0 \doteq \{x_{\mathcal I_0} \;|\; \eqref{eq:dyn} \text{ (right), } \eqref{eq:StInCstr}\} \text{ hold}\}$, and $\mathcal X_S \doteq \{x_{\mathcal I_S} \;|\; \eqref{eq:termSet}  \text{ holds} \}$.
}

The above OCP \Ae{is structured by} a path graph, since only two consecutive time steps  $(i,i+1)$ are linked via the system dynamics \eqref{eq:dyn} and all other constraints are separable in time.
\TF{Using the same rationale, we have $x_{\mathcal W_i} = z_{i-1}$ for all $i = 2,\dots,T$, i.e. , the coupling variable to the previous time step is its state.}
The resulting \Ae{path graph} is illustrated in \autoref{fig:ocpdp}.


\subsubsection*{Numerical Example}

Consider \eqref{eq:OCP} with quadratic stage cost $\ell(z,u) \doteq \|z\|_2^2  + 0.1\,\|u\|_2^2$ and  linear system dynamics
\begin{align} \label{eq:sysDyn}
	f(z_t,u_t) \doteq
	\begin{bmatrix}
		1.5 & 1 \\
		0 & 1.5
	\end{bmatrix}
	z_t
	+ 
	\begin{bmatrix}
		0 \\ 0.8
	\end{bmatrix}
	u_t.
\end{align}
Moreover, set $z(0) = [-0.3 \;\; -0.2]^\top$, $\mathbb U = \{- 1\leq u\leq 1\}$ and $T = 3$.
When using \eqref{eq:OCP} in the context of Model Predictive Control~(MPC), computing a feasible solution to \eqref{eq:OCP} guarantees closed-loop stability, if the terminal penalty $V_3$ and the terminal constraint set $\mathbb Z_3$ are designed appropriately.
Thus, we choose 
\begin{align*}
V_3(z_3) = 
z_3^\top 
\begin{bmatrix}
	7.80  &  4.87 \\
	4.87  &  4.82
\end{bmatrix}
z_3,
\text{ and }
\mathbb Z_3 =   \{\|z_3\|_\infty \leq 0.19\},
\end{align*}
which ensures closed-loop stability of \eqref{eq:sysDyn} \cite{Scokaert1999,Mayne2000}.

Now, we apply two variants of  Algorithm~\eqref{alg:appDP} to OCP \eqref{eq:OCP} for the above system dynamics and cost.
In the first variant, we use exact polytope projection from \autoref{sec:Pexact} for all time steps. 
For the second one, we use exact polytope projection for time steps $i\in\{2,3\}$ and the ellipsoidal inner approximation via \eqref{eq:ellFitDet} for time step $i=1$.\footnote{
	Note that one can combine  exact projections with inner approximations since exact projections are also inner approximations.}
We use the crude value function approximation $\tilde V_i \equiv0$.
Recall that by Theorem~\autoref{thm:feasADPappr}, Algorithm~\ref{alg:appDP} is guaranteed to compute a feasible point despite these approximations.

\autoref{fig:ocpdp} shows the exact polyhedral set projections $\{\mathcal P_t\}_{t \in \mathbb T}$ and the ellipsoidal inner approximation $\tilde {\mathcal P_1}$ computed in the backward sweep of  Algorithm~\ref{alg:appDP}.
Moreover, the optimal trajectory  $\{\Ae{z}_t^\star\}_{i \in \mathbb T}$ for solving OCP~\eqref{eq:OCP} is depicted.
Furthermore, the suboptimal but feasible solution trajectories  $\{\bar z_t^\star\}_{t \in \mathbb T}$ are shown in red, for the exact projections, and in green for the inner approximation.
One can see that, as predicted by Theorem~\autoref{thm:feasADPappr}, Algorithm~\ref{alg:appDP} indeed generates a feasible solution, which is suboptimal,  cf. the sequence of optimal coupling variables $\{z_t^\star\}_{t\in \mathbb T}$ vs. the sequence $\{\bar z_t^\star\}_{t\in \mathbb T}$.


\begin{remark}[Connection to existing techniques in MPC] \label{rem:exMPC}
	Notice that Algorithm~\ref{alg:appDP} can be considered as a generalization of methods, which are well-established in the control literature.
	Specifically, the backward sweep in Algorithm~\ref{alg:appDP} for exact computation of $\mathcal P_i$ with $V_i \equiv 0$ is equivalent to computing the \emph{$N$-step predecessor set} or the \emph{$N$-step backward reachable} set, cf. \cite{Angeli2008,Rakovic2006,Bjornberg2006,Blanchini1999,Bertsekas1972}.\hfill $\square$ 
\end{remark}

\subsection{Power Systems} \label{sec:PowerSys}
Next, we illustrate how Algorithm~\ref{alg:appDP} can be used for complexity reduction in optimization-based operation of power systems. 
Specifically, we apply our method to an AC Optimal Power Flow~(OPF) problem \cite{Frank2016} among multiple voltage levels, which is one of the most important optimization problem for power system operation.
In particular, we  show how to apply the above methods to compute  $\tilde {\mathcal  P}_i$,  as this is the most difficult step in Algorithm~\ref{alg:appDP}.

%

\subsubsection*{Grid Models}
Consider an electrical power system described by a graph  $G^e = (\mathcal N, \mathcal B)$, where $\mathcal N = \{1,\dots,|\mathcal N|\}$ is the set of buses and $\mathcal B \subseteq \mathcal N \times \mathcal N$ is the set of branches.
A  branch  models transmission lines, cables and transformers.
Consider a balanced grid with zero line-charging capacitances.
Then, the associated bus admittance matrix $Y = G+jB \in \mathbb{C}^{|\mathcal N|\times |\mathcal N|}$ is defined by
\[
[Y]_{k,l}=
\begin{cases}
	\sum_{k \in \mathcal N }y_{k,l} & \mbox{if} \quad k=l, \\
	-y_{k,l}, & \mbox{if} \quad k \neq l.
\end{cases}
\]
Here, $y_{k,l} = g_{k,l} + j b_{k,l}\in \mathbb{C}$ is the admittance of  branch $(k,l) \in \mathcal{E}$, where $g_{k,l}$ and $b_{k,l}$ denote its susceptance and conductance, respectively.
Note that $y_{k,l} =0$ if $(k,l) \notin \mathcal B$.
The  flow of active power $p_{k,l}$ and reactive power $q_{k,l}$ along branch $(k,l) \in \mathcal{E}$ is given by
\begin{subequations} \label{eq:lineFlows}
	\begin{align}
		p_{k,l} &= v_kv_l(G_{k,l}\cos(\theta_{k,l})+B_{k,l}\sin(\theta_{k,l})), \\
		q_{k,l} &= v_k v_l(G_{k,l}\sin(\theta_{k,l})-B_{k,l}\cos(\theta_{k,l})). \label{eq:PFEQq}
	\end{align}
\end{subequations}
Here, $v_k$ is the voltage magnitude at vertex $k\in \mathcal{N}$ and $\theta_{k,l}\doteq \theta_k - \theta_l$ is the voltage angle difference along the branch $(k,l) \in \mathcal{E}$.
With the above equations, we  model the grid behavior by  the AC power flow equations for all buses $k \in \mathcal{N}$
\begin{align} \label{eq:PFeq}
	&p_k = \sum_{l \in \mathcal{N}} p_{k,l} , \qquad  q_k = \sum_{l \in \mathcal{N}} q_{k,l},
\end{align}
where $p_k, q_k \in \mathbb{R}$ are the net active and reactive power injection at vertex $k \in \mathcal N$,  i.e. the residual of all generation and all demand.
The net powers are given by 
\begin{align} \label{eq:netPwr}
	p_k &= p_k^g   - p_k^d \qquad 	q_k = q_k^g - q_k^d,
\end{align}
where $p_k^g$ and $q_k^g$ are the active/reactive power generation, and $p_k^d$ and $q_k^d$ are the active/reactive power demand at bus $k \in \mathcal N$.

Under the assumption of zero line resistances, small voltage angles and constant voltage magnitudes,  \eqref{eq:lineFlows} can be simplified to the so-called DC model
\begin{align} \label{eq:DCPF}
	p_{k,l} = B_{k,l} \theta_{k,l}
\end{align}
for all $(k,l) \in \mathcal B$, cf.  \cite{Frank2016}.


%
%

\subsubsection*{Optimal Power Flow}
The goal of OPF is to minimize the cost of power generation while satisfying all grid constraints.
Define  a vector of controls $u_{\mathcal N} \doteq \operatorname{vcat}(\{(p_k^g,q_k^g)\}_{k \in \mathcal N}) \in \mathbb{R}^{2|\mathcal N|}$ and a vector of algebraic states 
$x_{\mathcal N}^\top  \doteq\operatorname{vcat}\big ( \operatorname{vcat}(\{[v_k,\theta_k]^\top\}_{k \in \mathcal N})]^\top, \operatorname{vcat}(\{(p_{k,l},q_{k,l})^\top\}_{(k,l) \in \mathcal B})   \big )$.
Then, a basic AC OPF problem reads
	\begin{subequations} \label{eq:ACOPF}
		\begin{align}
			&\min_{u_{\mathcal N},x_{\mathcal N}} \sum_{k \in \mathcal{G}} c_k( p_k^g)^2 + d_k p_k^g  \\
&			\text{subject to } \;\; \eqref{eq:lineFlows}, \eqref{eq:PFeq},  \eqref{eq:netPwr}, \hspace{-2cm}
			&\hspace{-2cm} \label{eq:prevCnstr}\\
			&\underline {v} \leq v_k \leq \bar{v}, &&& k &\in \mathcal{N}, \label{eq:vCstr} \\
			&p^g_k = q^g_k = 0, &&&k& \in \mathcal{N} \setminus \mathcal G, \hspace{-.5em} &  \label{eq:genCstrZero}\\
			&\underline p_k^g \leq p_k^g \leq \bar p_k^g&&& k& \in  \mathcal G, \label{eq:genCstrBds} \\
			&p_{k,l}^2 + q_{k,l}^2 \leq \bar s_{k,l}^2, \hspace{-1em} &&& (k,l) &\in \mathcal{E}, \label{eq:lineLim} \\
			&p_{k}^2 + q_{k}^2 \leq \bar s_{k}^2, \quad
			 -p_k\leq \alpha q_k\leq p_k, \hspace{-2em}  &&& k &\in \mathcal{G}, \label{eq:ice_cream_ratio1}\\
			&\theta_1 =0,  \;\;v_1=1, \hspace{-1cm}  \label{eq:refConstr}
		\end{align}
	\end{subequations}
where $\mathcal G \subseteq \mathcal N$ is a set of generator buses and \eqref{eq:genCstrBds} are lower and upper bounds on its power generation.
	Here, $\{(c_k,d_k)\}_{k \in \mathcal{G}}$ are generator-specific cost coefficients and \eqref{eq:refConstr} are  reference constraints, cf. \cite{Frank2016}. 
	Note that the power demands $\{p_k^d\}$ are given and fixed.
	The constraint \eqref{eq:vCstr} expresses bounds on the voltage magnitudes and \eqref{eq:lineLim} considers a simplified current constraint over branch $(k,l) \in \mathcal{E}$.
		The constraint \eqref{eq:ice_cream_ratio1} combines an upper bound to the maximum apparent power  with a  power factor constraint  $\alpha > 0$~\cite{contreras2018}.
%
%

	\TF{With the simplified  grid model from \eqref{eq:DCPF}, the DC counterpart to \eqref{eq:ACOPF} reads}
	\begin{subequations} \label{eq:DCOPF}
		\begin{align}
			\min_{u_{\mathcal N},x_{\mathcal N}}& \sum_{k \in \mathcal{N}} c_k( p_k^g)^2 + d_k p_k^g  \\
			\text{s.t.} \;\; & \eqref{eq:PFeq},  \eqref{eq:netPwr},\eqref{eq:DCPF},\;\; \theta_1 =0,\hspace{-2cm}
			&\hspace{-2cm} \label{eq:prevCnstrDC}\\
				&p^g_k = 0, \;\;&k &\in \mathcal{N} \setminus \mathcal G,   \\
			&\underline p_k^g \leq p_k^g \leq \bar q_k^g, &k& \in  \mathcal G, \label{eq:genCstrBdsDC} \\
			-&\bar p_{k,l}\leq p_{k,l} \leq \bar p_{k,l}, & (k,l) &\in \mathcal{E}, \label{eq:lineLim2} 
		\end{align}
	\end{subequations}
	where \eqref{eq:lineLim2} is a simplified version of the line limit \eqref{eq:lineLim}.
	
	\subsubsection{Exploiting Network Structure} \label{sec:NetStrc}
	\begin{figure}
	\centering
	\includegraphics[width=0.48\linewidth,trim={3cm 7cm 3cm 4cm}]{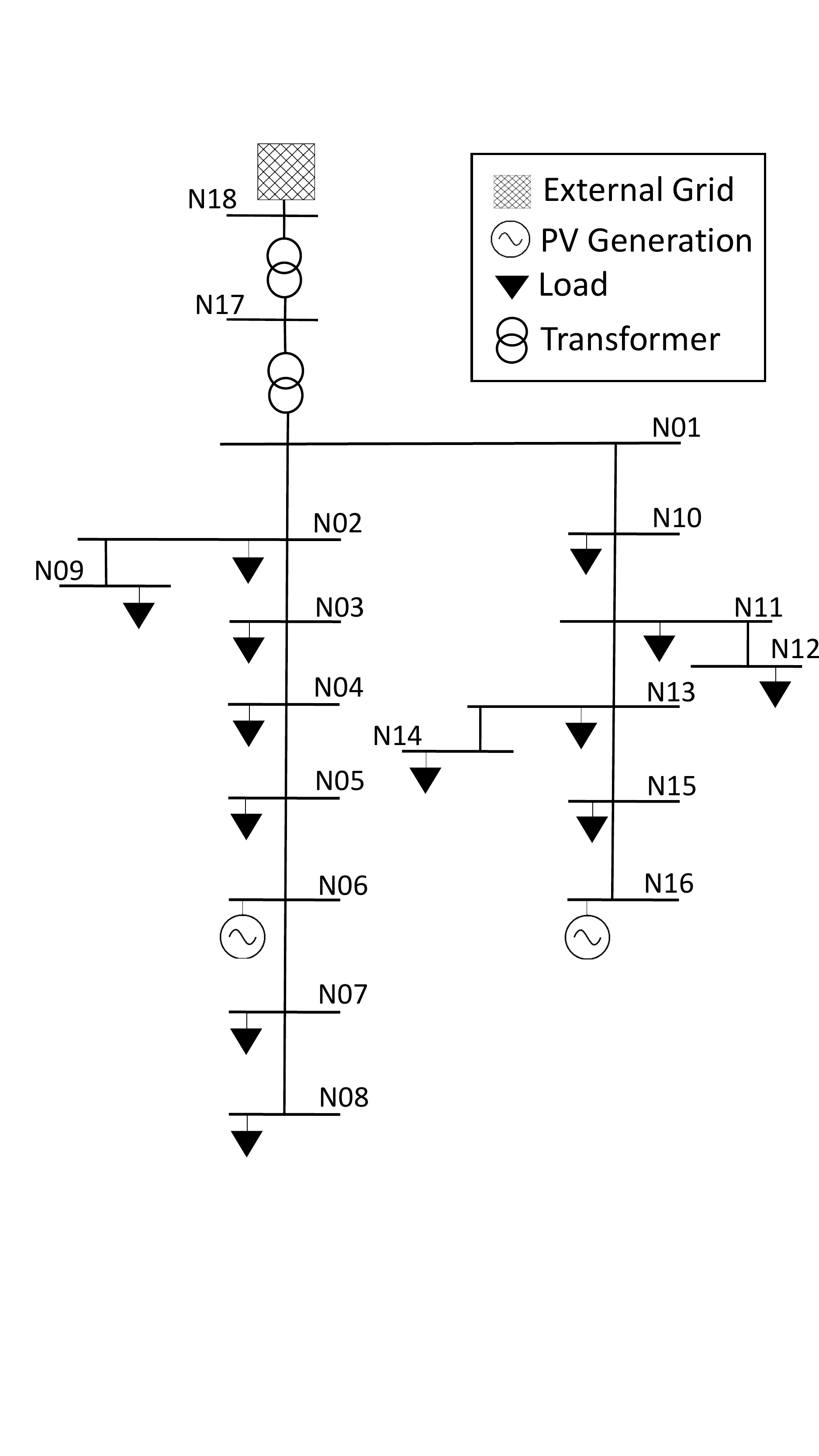}
	\caption{Modified distribution grid model from \cite{Grady1992}. }
	\label{fig:grid}
\end{figure}	
	For applying Algorithm~\ref{alg:appDP}, we first have to \Ae{reveal the tree-strucure of OPF problems} \eqref{eq:ACOPF} and \eqref{eq:DCOPF}, \Ae{cf. \autoref{ass:tree}.}
 \Ae{A typical} distribution grid is shown in \autoref{fig:grid}.
	Observe that this grid has no connection to other distribution grids, which leads to a tree structure in the interaction graphs of  \eqref{eq:ACOPF}, \eqref{eq:DCOPF} if the vertex set $\mathcal N$ is partitioned appropriately. 	\Ae{This structure is  very common in power systems \cite{Weedy2012,Short2004} and it can be seen as a cornerstone assumption for most ``aggregation'' approaches \cite{MayorgaGonzalez2018,capitanescu2018, kalantar2020,silva2018,contreras2018,nazir2022,givisiez2020,Tan2023a,Tan2019,Zhao2016,Muller2019a, Kellerer2015,Capitanescu2023}, which we would like to generalize with our work.} 
	Here, for the sake of simplicity, we consider a tree-depth of $2$, i.e. , we only have one upper-level (transmission) grid and several lower-level (distribution) grids connected to it.

	Partition the set of buses $\mathcal{N}$ into a subset containing the buses of the upper-level grid  $\mathcal{N}_1$ and into $|\mathcal{S}|-1$ subsets for the lower grid levels $\mathcal N_i, i \in \mathcal{S} \setminus \{1\}$. Where $\mathcal S =\{1,\dots,|\mathcal S|\}$ is such that we have $\left (\cup_{i \in \mathcal{S} } \mathcal N_i \right ) = \mathcal{N}$ and all $\mathcal N_i$ are mutually disjoint.
	\Ae{Moreover, denote $\mathcal N_i^c \subseteq \mathcal N_1$ as the set of buses in the upper level to which the $i$th lower-level grid  is connected.}
	Similarly, 	 partition the set of branches $\mathcal B$ into $\{\mathcal B_i\}_{i \in \mathcal S}$ such that all branches connecting vertexs in $\mathcal N_i$ belong to $\mathcal B_i$, where we assign the branches connecting lower and upper-level grid $\mathcal B_i^c \subseteq \mathcal B_i$  to the \Ae{lower} level.
	This way, we \Ae{define
	\begin{align*}
&	\quad	f_i(x_{{\mathcal I}_i})\doteq \sum_{k \in \mathcal{G} \cap \mathcal N_i} c_k( p_k^g)^2 + d_k p_k^g + \iota_{\mathcal X_i}(x_{{\mathcal I}_i}),\; \text{ where }\\
&	 \Ae{ \mathcal X}_i \doteq \big \{ \Ae{x_{\mathcal I_i} }\;|\;\eqref{eq:prevCnstr}-\eqref{eq:lineLim} \text{ hold } \forall k \in \mathcal N_i,  \forall (k,l) \in \mathcal B_i \Ae{\cup \mathcal B_i^c}\big \}, 
	\end{align*}}
	for all $i \in \mathcal S \setminus \{1\}$.
	 $ \Ae{\mathcal X}_1$ is defined as above with the additional reference constraints \eqref{eq:refConstr} \Ae{and neglecting $\mathcal B_i^c$.}
	According to the definitions of local and coupling vectors from \autoref{sec:probForm}, this leads to local decision variables  $\Ae{x_{\mathcal I_i}}^\top =\big   [\operatorname{vcat}(\{[p^g_k, q^g_k, p_k, q_k, v_k, \theta_k]\}_{k\in \mathcal N_i \Ae{\cup \mathcal N_i^c}}),$ $ \operatorname{vcat}(\{[p_{k,l}, q_{k,l}]\}_{(k,l) \in \mathcal B_i \cup \mathcal B_i^c}^\top )\big ]$ for all $i \in \mathcal S\setminus \Ae{\{1\}}$ and \Ae{$x_{\mathcal I_1}$ is defined in the same way but excluding the active and reactive power flows $(p_{k,l},q_{k,l})$ over coupling lines $(k,l) \in \mathcal B_i^c$.
	}
	
	\Ae{Next, we identify coupling variables.}
	\Ae{Note that the only objective/constrain functions coupling the upper-level grid and the lower level grids are the line flow equations \eqref{eq:lineFlows} for coupling lines $\mathcal B_i^c$.
		Since these constraints are assigned to the lower-level grids, and the decision variables $(p_{k,l},q_{k,l})$ appear also in the upper-level grid through the power balance constraint \eqref{eq:PFeq}, we identify them as coupling variables.
		Moreover, the pair $(v_k,\theta_k)$ at the coupling vertexs in the upper-level $k\in \mathcal N_i^c$ also appear in this constraint.
		Hence, we have four coupling variables per coupling vertex $k\in \mathcal N_i^c$
		\begin{align*}
			\Ae{x_{\mathcal W_i}}^\top \doteq [\Ae{\operatorname{vcat}(\{v_{k}, \theta_{k}\}_{k \in \mathcal N_i^c  })^\top}, \operatorname{vcat}(\{p_{k,l}, q_{k,l}\}_{(k,l) \in \mathcal B_i^c  })^\top ].
		\end{align*}
	}

\begin{figure*}[h]
		\begin{subfigure}[t]{0.33\textwidth}
		\centering
		\includegraphics[width=.75\textwidth]{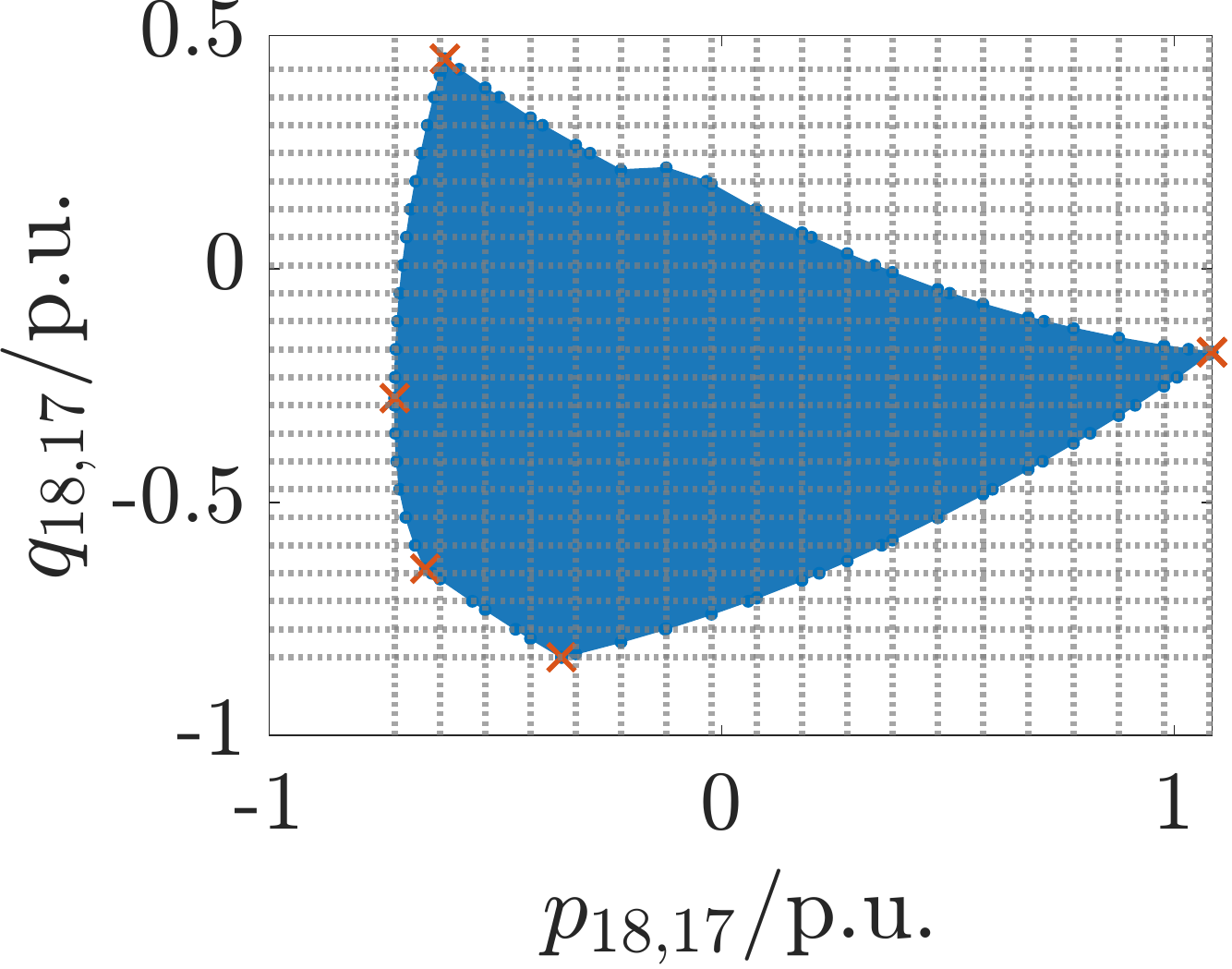}
		\caption{$\mathcal P_2$ \Ae{for $v_{18}=1\,$p.u..}}
		\label{fig:aggregation}
	\end{subfigure}
	\begin{subfigure}[t]{0.33\textwidth}
		\centering
		\includegraphics[width=.95\textwidth]{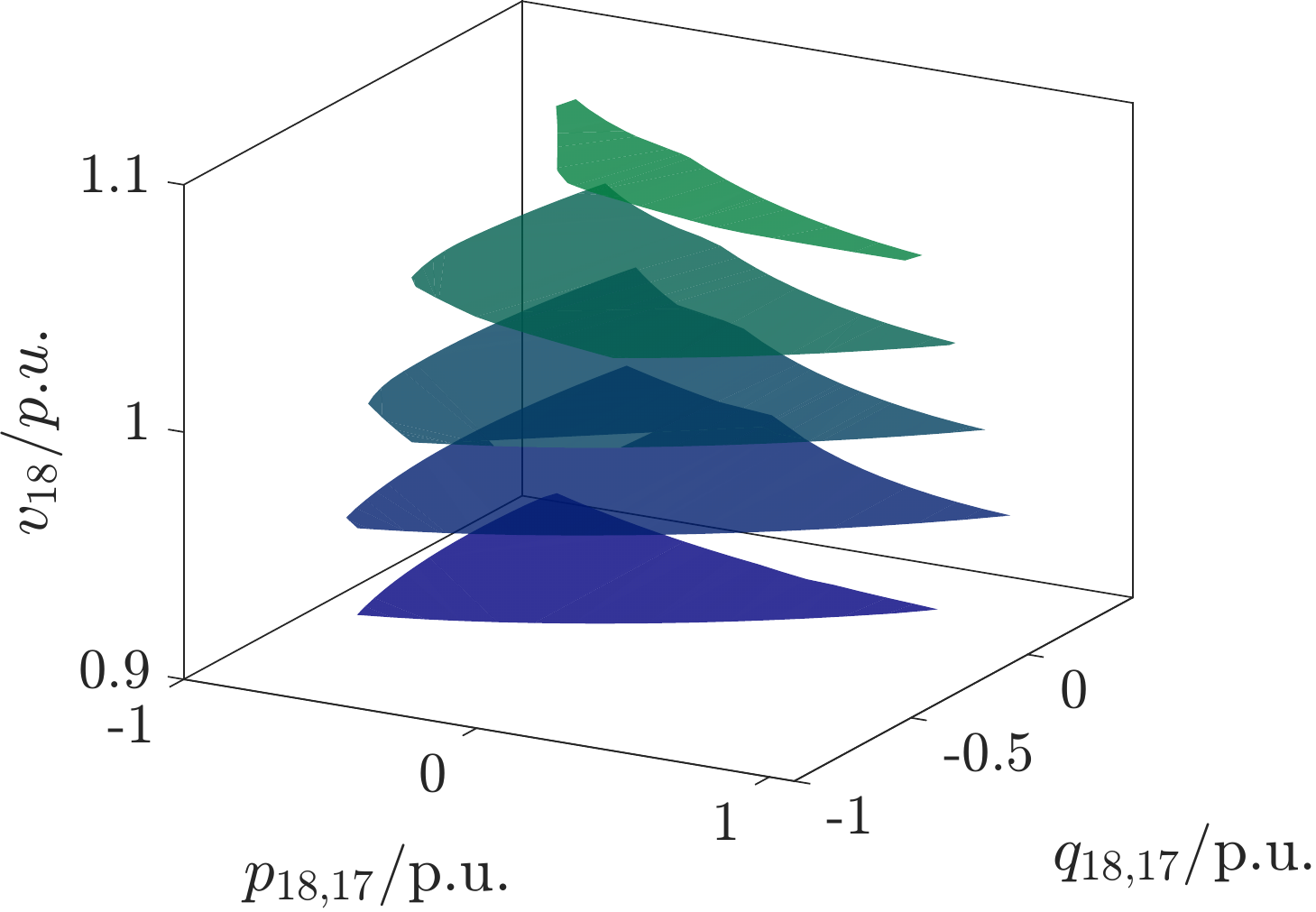}
		\caption{Influence of $v_{18}$ on ${\mathcal{P}_2}$.}
		\label{fig:v_influence}
	\end{subfigure}
	\begin{subfigure}[t]{0.33\textwidth}
		\centering
		\includegraphics[width=.8\textwidth]{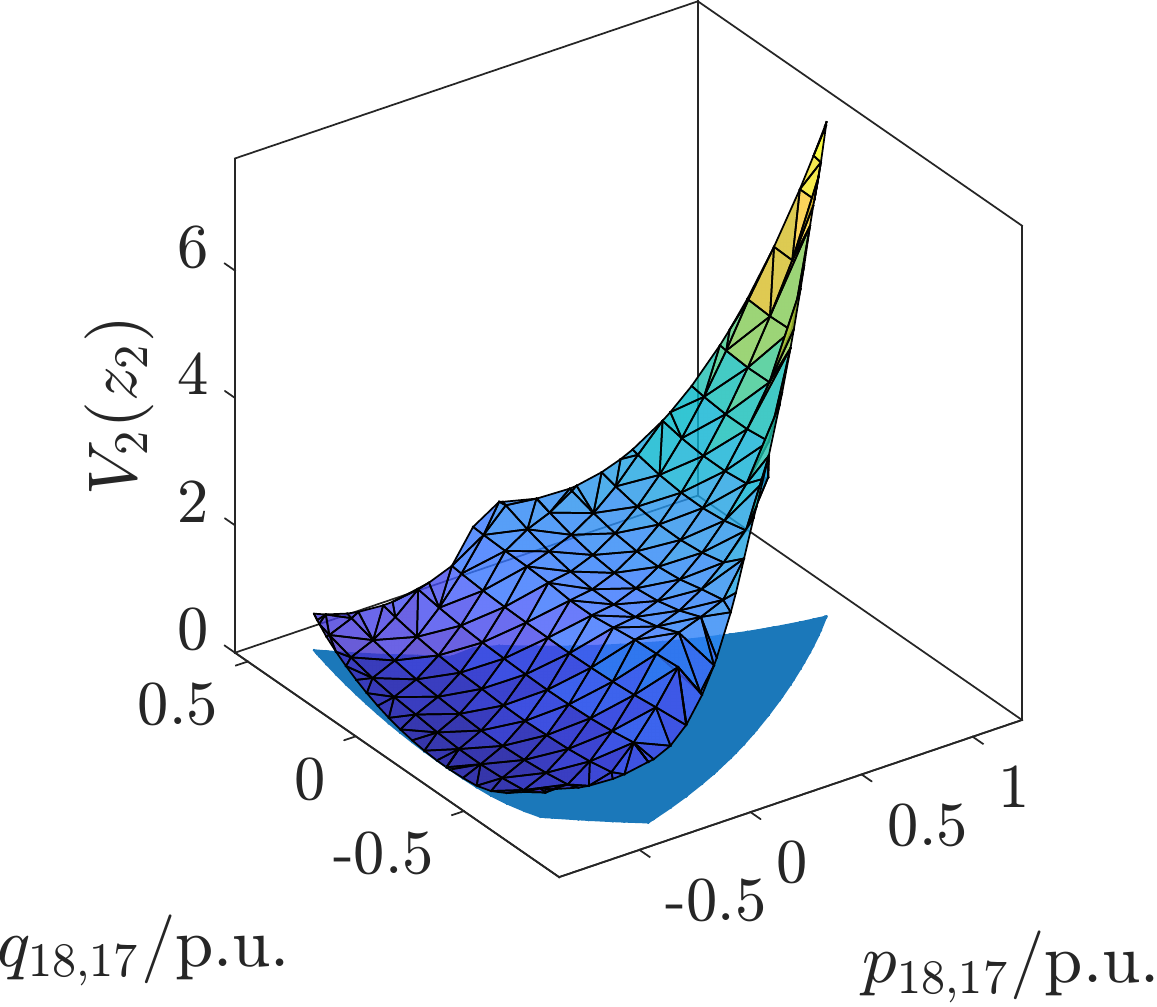}
		
		\caption{$V_2$ and $\mathcal P_2$  \Ae{for $v_{18}=1\,$p.u..}}
		\label{fig:ValFun}
	\end{subfigure}
	\caption{\Ae{Projection  $\mathcal P_2$ and  value function $V_2$ for the AC OPF problem  \eqref{eq:ACOPF} with data from \autoref{fig:grid}.}}
\end{figure*}

	\subsubsection*{\Ae{Fostering Numerical Tractability}}
	\Ae{Since we consider only one coupling vertex $|\mathcal N_i^c|=1$  and   since \eqref{eq:lineFlows} depends only on the voltage angle difference $(\theta_k-\theta_l)$, we have $p_{k,l}(v_k,v_l, \theta_k,\theta_l) = p_{k,l}(v_k,v_l, \theta_k + \bar \theta,\theta_l+\bar \theta)$ for any $\bar \theta \in \mathbb R$.
		Thus fixing $\theta_k = 0$ in $ \mathcal X_i$  does not alter the solution space
		}
	\Ae{	Hence, we can neglect the coupling from $\theta_k, k \in \mathcal N_i^c$ and obtain a reduced set of coupling variables\footnote{\Ae{Note that this is not possible in case of multiple grid interconnection  points, since in this case there is more than one $\theta_k^\star$ in $\mathcal N_i^c$ and these $\theta_k^\star$ can not be chosen independently. The efficient handling of  multiple interconnections is subject to ongoing research \cite{Majumdar2023}.}}
		\begin{align*}
			\Ae{\hat x_{\mathcal W_i}}^\top \doteq [\Ae{\operatorname{vcat}(\{v_{k}\}_{k \in \mathcal N_i^c  })^\top}, \operatorname{vcat}(\{p_{k,l}, q_{k,l}\}_{(k,l) \in \mathcal B_i^c  })^\top ] \in \mathbb R^{3}.
	\end{align*}}
	\subsubsection{Numerical Results}	\label{sec:numResOPF}

	Consider  the distribution grid from \autoref{fig:grid}, which is based on \cite{Grady1992}.
	In this model, we set $\mathcal G = \{6,16\}$, which are equipped with (curtailable) solar generators with a high peak-power.
	The distribution grid is connected to the transmission grid, where we consider bus $18$ as a bus of the transmission grid.
	Label the distribution grid with $i=2$.
	Hence, we have $\Ae{x_{\mathcal W_2}} = [\Ae{v_{18}},p_{18,17}, q_{18,17}]$.
	\TF{Next, we compute $\mathcal P_2$ and $V_2$ for the AC problem \eqref{eq:ACOPF} and its DC simplification \eqref{eq:DCOPF}.}
All  values are normalized via the per-unit (p.u.) system in the following, cf. \cite[Appendix C]{Frank2016}.

	\subsubsection*{AC OPF}

	The AC OPF problem \eqref{eq:ACOPF} is nonconvex.
	Hence, we rely on  \autoref{alg:discMeth2} to compute a set of sample points $\{\mathsf z_l\}_{l\in \mathcal T}$ at the boundary of $\mathcal P$, \Ae{where we fix certain values for $v_k,k\in \mathcal N_i^c$ and then solve~\eqref{eq:boundaryProb} to obtain an approximation of the three-dimensional solution space.}
\Ae{The  result is shown in  Figure \ref{fig:v_influence}.
One can  see that the value of $v_k$ has a strong influence on the admissible $p_{18,17}/q_{18,17}$-area.}
	\Ae{Fixing the value of $v_{18}$  to a certain value  is a common simplification in aggregation approaches for power systems.\footnote{\TF{With the exception of \cite{Sarstedt2020}, works on flexibility aggregation do not consider $v_k$ as a coupling variable, but  instead use a constant value usually such as 1 p.u., assuming the voltage drop in the transmission grid is negligible. The works \cite{purchala2005,stott2009} underline the practical value of this commonly used assumption while acknowledging its occasional limitations.} }
	Doing so yields a two-dimensional coupling space.
	The resulting $\mathcal P$ for  $v_{12}=1\,\mathrm{p.u,}$ is shown in \autoref{fig:aggregation}, where the computed samples are displayed as red crosses.}
	Since using these points only would be too coarse, we use  gridding  from \autoref{sec:PinnerAppox} to obtain a finer discretization of $\mathcal P$.
	Here, the interval $-0.72\leq p_{18,17} \leq 1.08$ is discretized via 19 equi-distant discretization points, and \eqref{eq:boundaryProb} is solved for these values fixed for $c_l\in \{-1,1\}$ yielding minimal/maximal values of $q_{18,17}$ at these discretization points for $p_{18,17}$.
	The same approach is applied to $q_{18,17}$. 
	The resulting  points are shown as blue dots in \autoref{fig:aggregation}.
	Observe that $\mathcal P_1$ is nonconvex. 
	Hence, for applying \autoref{alg:appDP}, one has to construct inner approximations $\tilde {\mathcal P_2} \subset \mathcal P_2$ in order to guarantee feasibility.

	\autoref{fig:ValFun} shows the corresponding \textit{true} value function $V_2(z_2)$ for all $z_2\in \mathcal P_2$ obtained by discretizing the interior of $\mathcal P_2$ and evaluating $V_2$.
	Observe that, by definition, $V_2(z_2)=\infty$ for all $z_2 \notin \mathcal P_2$.
	One can see, as mentioned in \autoref{rem:propV}, $V_2$ is nonlinear and nondifferentiable; hence one would have to rely on approximations $\tilde V_2 \approx V_2$ in order to communicate and efficiently integrate this function in the optimization for the transmission grid.

%

	\subsubsection*{DC OPF}
Observe that the feasible set of the DC OPF problem \eqref{eq:DCOPF} is a convex polyhedron in form of \eqref{eq:polCstr}.
Hence, we can rely on an exact polytope projection.
Since the reactive power $q_{18,17}$  is neglected in the DC model, $\mathcal P_2$ becomes an interval yielding $\mathcal P_2 = \{-0.76\,\text{p.u.} \leq p_{18,17} \leq 1.04\,\text{p.u.} \}$.
The corresponding value function is shown in \autoref{fig:dc_value}.
Observe that also here, $V_2$ is nondifferentiable.
However, since \eqref{eq:DCOPF} is a quadratic program, $V_2$ is piecewise-quadratic~\cite{Bemporad2000}.

	\begin{figure}[t]
	\centering
	\includegraphics[width=0.45\linewidth]{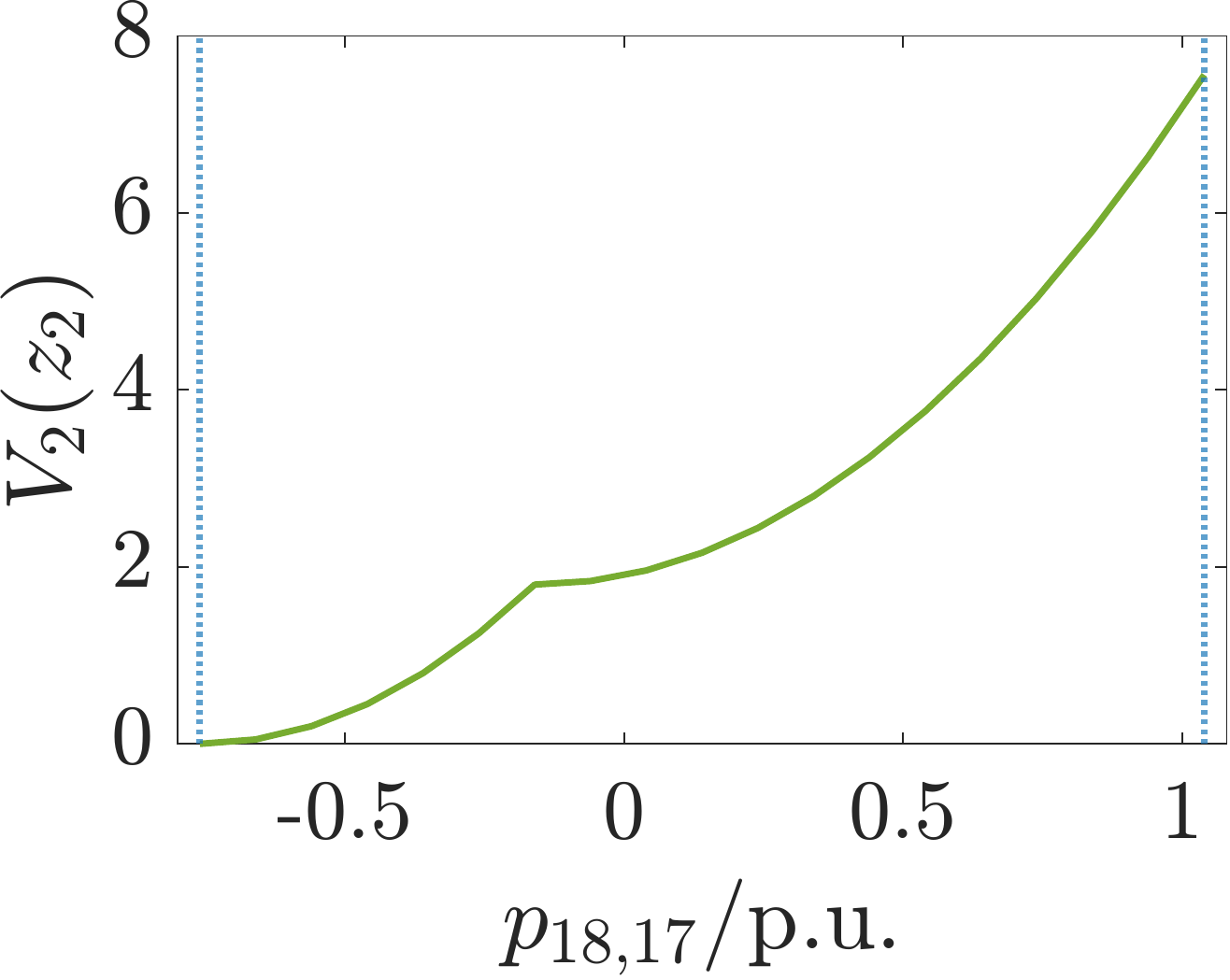}
	\caption{$V_2$ and $\mathcal P_2$ for the DC OPF problem \eqref{fig:grid}.}
	\label{fig:dc_value}
\end{figure}

\Aee{Note that for polytopic constraint sets from the DC model or LinDistFlow \cite{Baran1989}, the projection scales up to moderate dimensions of $\mathcal P_i$. 
}

\begin{remark}[Connection to  methods from power  systems] \label{rem:exPsys}
Algorithm~\ref{alg:appDP} is a generalization of methods, which have been recently developed for solving special instances of AC OPF problems,  for charging of electric vehicles and for the  integration of storage devices \cite{Krause2009,capitanescu2018, kalantar2020,silva2018,MayorgaGonzalez2018,Evans2022,Angeli2021,Ozturk2022,Appino2021}.
Here, the backward sweep computing $\mathcal P_i$ is called \emph{aggregation} and the forward sweep is called \emph{disaggregation} or \emph{dispersion}.
This naming comes from the fact that the degrees of freedom in the lower grid levels are seen as \emph{flexibility}, i.e., they represent flexible load or generation, which can be used for avoiding congestions in the upper grid levels by solving the corresponding upper-level problems.
\Ae{Recent works have  recognized the connection between aggregation and  projection for the special case of polyhedral constraint sets \cite{Tan2023a,Zhao2016}.}
For an overview on methods in this domain we refer to \cite{Sarstedt2021}. \hfill $\square$
\end{remark}

%
%

	
	\section{Conclusions} \label{sec:conclusions}
	This paper has presented an approximate dynamic programming scheme for complexity reduction in hierarchical tree-structured optimization problems of networked systems.
	Our approach guarantees the computation of feasible but potentially suboptimal points in \emph{one shot}, which is of particular importance in many engineering contexts.
	\TF{Moreover, our proposed scheme appears to be  a generalization of methods, which have been used in different domains. Hence, it  provides an avenue for  method transfer between domains. }
	
	The proposed scheme relies on the ability to compute set projections or inner approximations thereof.
	Thus, future work will focus on the further development of these methods.

\appendices

\section{Proofs} \label{sec:prelim}
\Ae{
	
\textit{Proof of  \autoref{lem:treeRef}:}
		Start at a leaf $l \in \mathcal L$ of the interaction tree $G$.
		The parent of subsystem $l$ is unique by Assumption~\ref{ass:tree} \cite[Cor 1.5.2]{Diestel2017}. Thus, $x_{{\mathcal W}_l}$ is well-defined. Hence, by separability of  \eqref{eq:sepNLP1} and since $\min_{x,y} f(x) + g(y) = \min_{x} f(x) + \min_y g(y)$, we can  define $V_l(x_{{\mathcal W}_l}) \doteq \min_{x_{{\mathcal L}_l}} f_l(x_{{\mathcal L}_l})$ at all leaves.
		This matches definition \eqref{eq:ValFunRef0} since $\mathcal C_l=\emptyset$ and summing over an empty set is zero by convention.
		At  parents $j \in \operatorname{par}(l)$, we  apply definition \eqref{eq:ValFunRef0} replacing the index $j$ with $l$.
		By induction, we proceed recursively until the root-subsystem $1$ is reached for which $\operatorname{par}(1)=\emptyset$ and thus $\mathcal{W}_1 = \emptyset$.
		Hence, at the root, $V_1$ has no arguments and thus \eqref{eq:ValFunRef0} reduces to an optimization problem without arguments of the value function.}
	\hfill$\blacksquare$

\textit{Proof of  \autoref{lem:domVi}:}
	\Ae{	Consider a point $\bar z \in \operatorname{dom}(V)$, i.e. $V(\bar z) < \infty$. 
		Thus, by \eqref{eq:valFunProb3}, there must exist a corresponding $\bar y$ such that $F(\bar y,\bar z)< \infty$, i.e. a  $\bar y$ such that $(\bar y,\bar z)\in \operatorname{dom} (F)$.
		Hence, $\bar z \in \operatorname{proj}_{\mathcal Z}{(\operatorname{dom}(F))}$ by definition of $\operatorname{proj}_{\mathcal Z}$.
		
		Consider a point $\bar z \in  \operatorname{proj}_{\mathcal Z}{(\operatorname{dom}(F))}$.
		By  definition of the projection, there  exists a corresponding $\bar y$ such that $(\bar y, \bar z ) \in \operatorname{dom}(F)$.
		Hence, $V(\bar z) < \infty$ in \eqref{eq:valFunProb3}, thus $\bar z \in \operatorname{dom}(V)$.}
\hfill$\blacksquare$
\printbibliography

\begin{IEEEbiography}
	[{\includegraphics[width=1in,height=1.25in,clip,keepaspectratio]{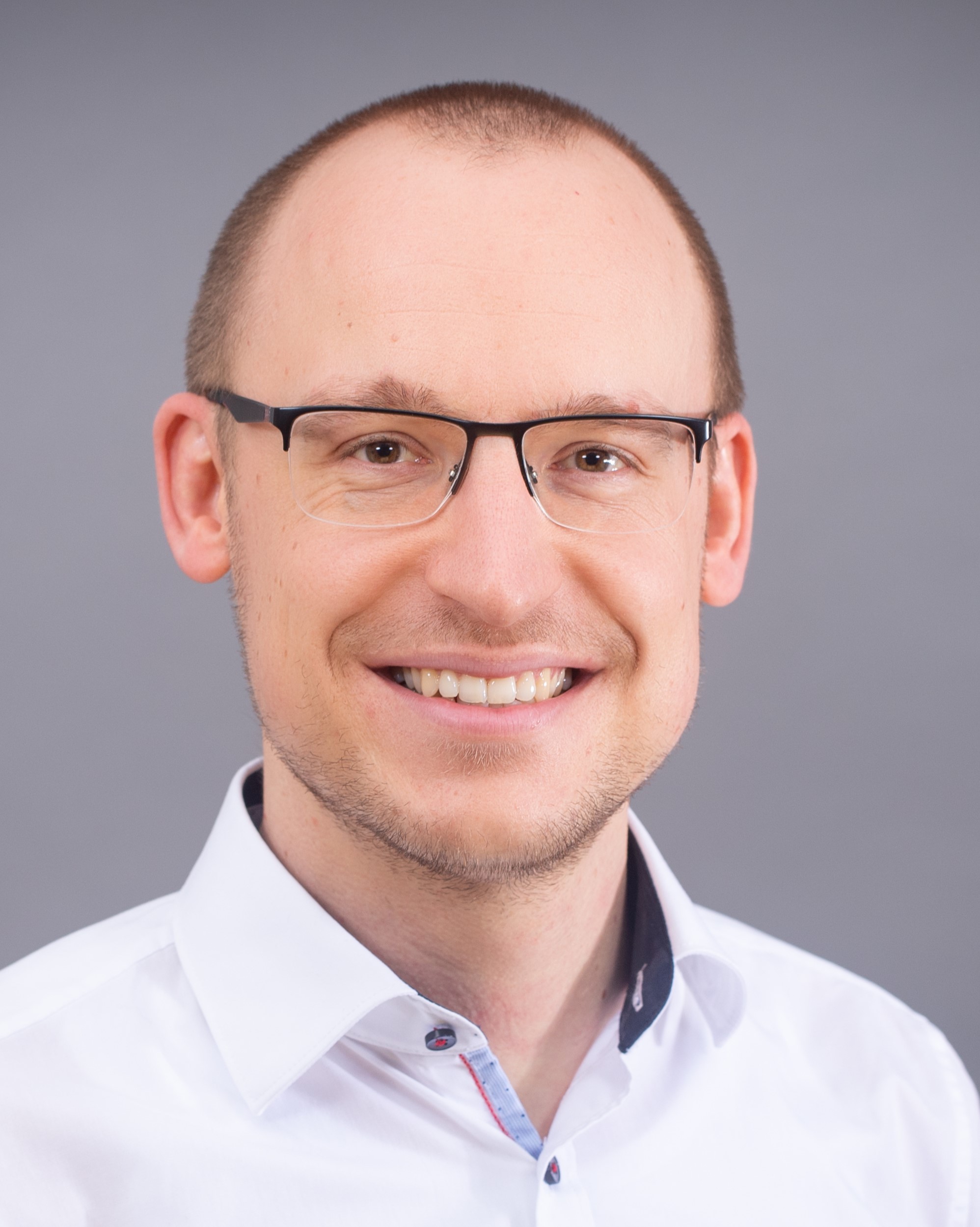}}]{Alexander  Engelmann}  received the M.Sc. degree in electrical engineering and information technology and the Ph.D. degree in informatics from the Karlsruhe Institute of Technology, Karlsruhe, Germany, in 2016 and 2020, respectively.
	Since 2020, he is a postdoctoral researcher at the Institute for Energy Systems, Energy Efficiency and Energy Economics at TU Dortmund University, Dortmund, Germany. His research focuses on distributed optimization and optimal control for power and energy systems.
\end{IEEEbiography}

\begin{IEEEbiography}
	[{\includegraphics[width=1in,height=1.25in,clip,keepaspectratio]{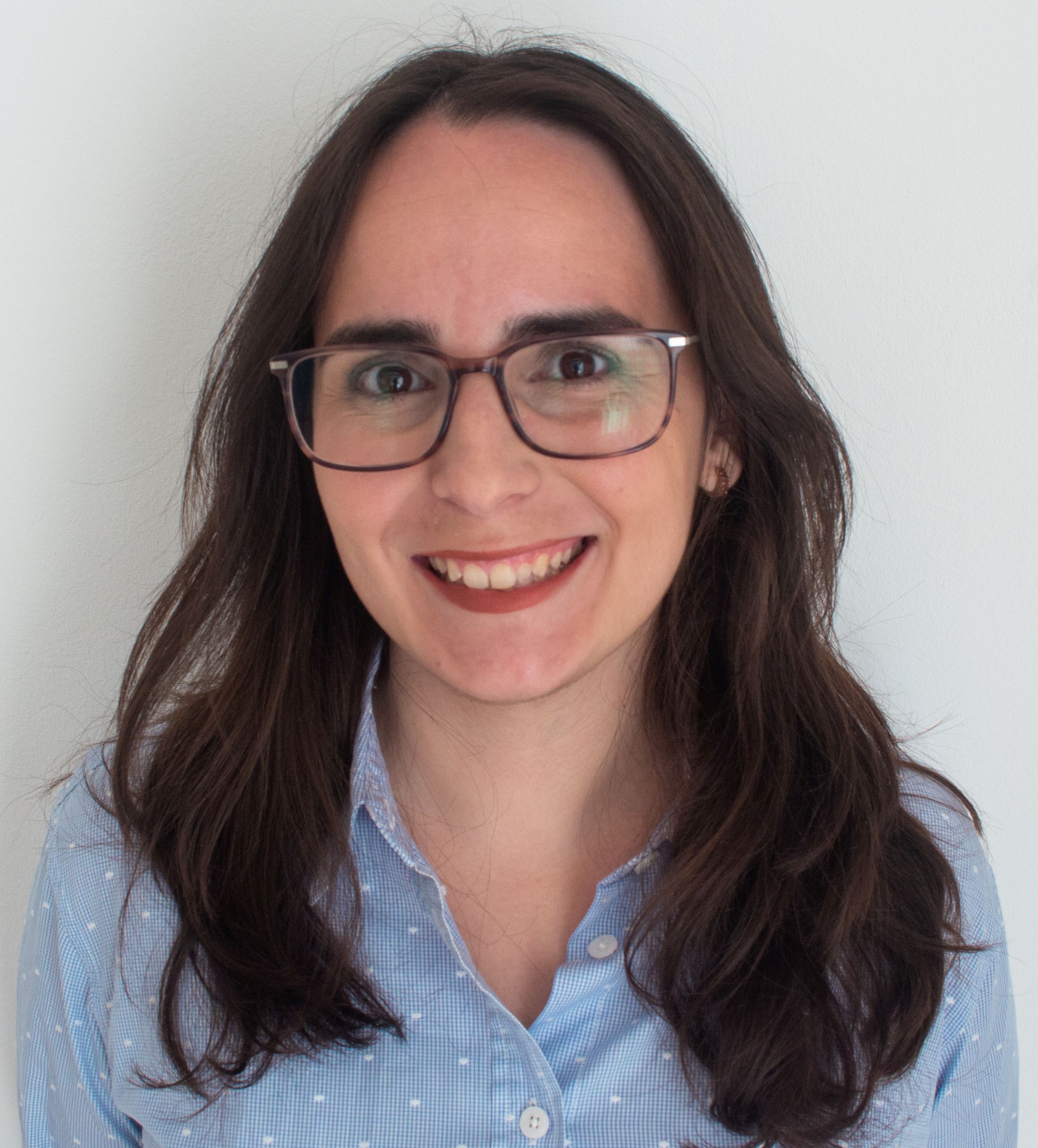}}]{Maísa Beraldo Bandeira} studied Control and Automation engineering at Universidade Federal de Santa Catarina in Brazil. In 2022, she received the M.Sc. degree and started pursuing the Ph.D. degree both in Electrical Engineering and Information Technology at TU Dortmund University. Her research focuses on power aggegation methods for multi voltage level coordination.
\end{IEEEbiography}

\begin{IEEEbiography}
	[{\includegraphics[width=1in,height=1.25in,clip,keepaspectratio]{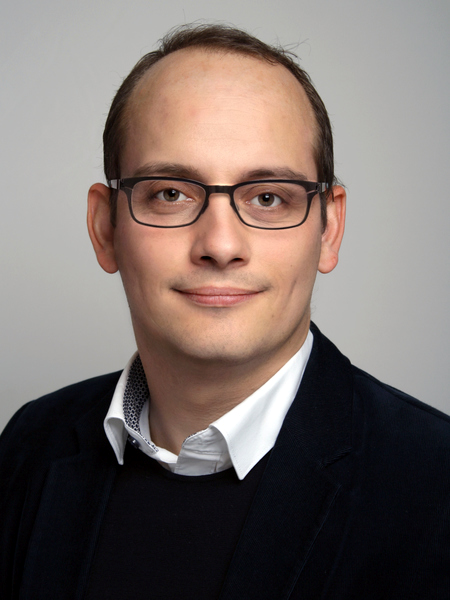}}]{Timm Faulwasser }  has studied Engineering Cybernetics at the University of Stuttgart, with majors in systems and control and philosophy. From 2008 until 2012 he was a member of the International Max Planck Research School for Analysis, Design and Optimization in Chemical and Biochemical Process Engineering Magdeburg. In 2012 he obtained his PhD from the Department of Electrical Engineering and Information Engineering, Otto-von-Guericke-University Magdeburg, Germany. From 2013 to 2016 he was with the Laboratoire d’Automatique, École Polytechnique Fédérale de Lausanne (EPFL), Switzerland, while 2015-2019 he was leading the Optimization and Control Group at the Institute for Automation and Applied Informatics at Karlsruhe Institute of Technology (KIT), where he successfully completed his habilitation in the Department of Informatics in 2020. In November 2019 he joined the Department of Electrical Engineering and Information Technology at TU Dortmund University, Germany. Currently, he serves as associate editor for the IEEE Transactions on Automatic Control, the IEEE Control System Letters, as well as Mathematics of Control Systems and Signals. His main research interests are optimization-based and predictive control of nonlinear systems and networks with applications in energy, process systems engineering, mechatronics, and beyond. Dr. Faulwasser received the 2021-2023 Automatica Paper Prize. 
\end{IEEEbiography}

\end{document}